\newcommand{\dis}{\displaystyle}
\numberwithin{equation}{section}
\newtheorem{theorem}{Theorem}[section]
\newtheorem{lemma}[theorem]{Lemma}
\newtheorem{proposition}[theorem]{Proposition}
\theoremstyle{remark}
\newtheorem{remark}[theorem]{Remark}
\newcommand{\norm}[1]{ \| #1  \|}
\newcommand{\Norm}[1]{\left \| #1 \right \|}
\newcommand{\bka}[1]{{\langle #1 \rangle}}
\newcommand{\abs}[1]{\left | #1 \right |}
\newcommand{\R}{\mathbb{R}}
\newcommand{\C}{\mathbb{C}}
\renewcommand{\O}{\mathcal{O}}
\newcommand{\dist} {\mathop{\mathrm{dist}}}
\newcommand{\Id}{\mathrm{Id}}
\newcommand{\pd}{\partial}
\newcommand{\etat}{\tilde{\eta}}
\renewcommand{\epsilon}{\varepsilon}
\newcommand{\laplacian}{\Delta}
\newcommand{\real}{{\mathbb R}}
\newcommand{\complex}{{\mathbb C}}
\newcommand{\twovec}[2]{\left[\begin{matrix} #1 \\ #2 \end{matrix}\right]}
\newcommand{\svec}[1]{\left \llbracket #1 \right \rrbracket}
\newcommand{\inner}[2]{\left\langle{ #1 },{ #2 }\right\rangle}
\newcommand{\boldL}{{\bf L}}
\newcommand{\boldI}{{\bf I}}
\newcommand{\boldV}{{\bf V}}
\newcommand{\scrptL}{{\mathcal L}}
\newcommand{\J}{{\bf J}}
\newcommand{\K}{{\bf K}}
\newcommand{\EVp}{{e_+}}
\newcommand{\EVm}{{e_-}}
\newcommand{\EVpm}{{e_\pm}}
\newcommand{\Yp}{{Y_+}}
\newcommand{\Ym}{{Y_-}}
\newcommand{\Yr}{{Y_{re}}}
\newcommand{\Yi}{{Y_{im}}}
\newcommand{\bigO}{{\mathcal O}}
\newcommand{\calI}{{\mathcal I}}
\newcommand{\calN}{{\mathcal N}}
\newcommand{\calK}{{\mathcal K}}
\newcommand{\calP}{{\mathcal P}}
\newcommand{\Lloc}{{L^q}}
\newcommand{\TEx}{{T_{exit}}}
\newcommand{\TC}{{T_{crit}}}
\newcommand{\TD}{{T_{dec}}}
\newcommand{\THy}{{T_{hyp}}}
\newcommand{\Bp}{{b_+}}
\newcommand{\Bm}{{b_-}}
\newcommand{\dBp}{{\dot{b}_+}}
\newcommand{\dBm}{{\dot{b}_-}}
\newcommand{\mZero}{m_0}
\newcommand{\deltaone}{{\delta}}
\newcommand{\muBp}{1}
\newcommand{\muBm}{1}
\newcommand{\muP}{{\sigma_p}}
\newcommand{\muQ}{{\sigma_q}}
\newcommand{\muLoc}{\muQ}
\newcommand{\muNon}{\mZero\muLoc}
\newcommand{\muThm}{\mu}
\title{Local Dynamics Near Unstable Branches of NLS Solitons}
\author{V.~Combet\footnote{vianney.combet@math.univ-lille1.fr}\qquad T.-P.~Tsai\footnote{ttsai@math.ubc.ca}\qquad
I.~Zwiers\footnote{zwiers@math.ubc.ca}}
\date{UBC, Vancouver}
\begin{document}

\maketitle

\begin{abstract}
Consider $\left\{\phi_{\omega}\right\}_{\omega \in \calI}$, a branch of unstable solitons of NLS whose linearized operators have one pair of simple real eigenvalues in addition to the zero eigenvalue.
Under radial symmetry and standard assumptions, solutions to initial data from a neighbourhood of the branch either converge to a soliton, or exit a larger neighbourhood of the branch transversally.
The qualitative dynamic near a branch of unstable solitons is irrespective of whether blowup eventually occurs, which has practical implications for the description of blowup of NLS with supercritical nonlinearity.
\end{abstract}

\maketitle

\section{Introduction}

\subsection{Nonlinear Schr\"odinger equation and solitons}

Let us consider the nonlinear Schr\"odinger equation (NLS) with covariant general nonlinearity $g(u):=f(|u|^2)u$,
\begin{equation}\label{Eqn-NLS}
i\partial_t u + \laplacian u - V_0(x) u + f(\abs{u}^2)u =0,\\
\end{equation}
where $u\, :\, (t,x)\in\R\times\R^N\mapsto u(t,x)\in\C$, $N\geq 1$, and
$u(0,\cdot)=u_0\in H^1(\R^N)$. We assume that $f$ is real-valued,
the $C^1$ nonlinearity $g(u)$ is $C^2$ for $u \not =0$, and
$H^1$-subcritical in the sense that:
\begin{align}\label{Eqn-Nonlin}
g(0) = 0, && g'(0) = 0 &&\mbox{ and } && \abs{g''(u)} \lesssim
\abs{u}^{m_1-2} + \abs{u}^{m_2-2} \quad \mbox{for}\ u\neq 0,
\end{align}
with  $1<m_1\leq m_2<m_{\max}$;
$m_{\max}=\frac{N+2}{N-2}$ if $N\geq 3$, and $m_{\max}=+\infty$ if $N=1,2$.
$V_0$~is either zero, or a smooth localized potential such that a range of assumptions hold; see Section \ref{SubSec-Assumptions}.
These equations are Hamiltonian, with the following conserved quantities:
\[\begin{aligned}
&E(u_0) &&=&& E(u)&&:=&& \frac{1}{2}\int \left(\abs{\nabla u}^2 + V_0\abs{u}^2\right) -\int G(u) && \mbox{(energy)}, \\
&M(u_0) &&=&& M(u)&&:=&& \frac{1}{2}\int \abs{u}^2 && \mbox{(mass)},
\end{aligned}\]
where $G(s)=\int_0^s g(\bar{s})d\bar{s}$.
For data in the \emph{energy space} $H^1(\R^N)$, equation \eqref{Eqn-NLS} is locally well posed with a blowup alternative: there exists $T_{\max}>0$ such that
\[
u(t) \in C\left([0,T_{\max}),H^1\right),
\]
and either $T_{\max} = \infty$ or $\lim_{t\to T_{\max}}\norm{u(t)}_{H^1} = \infty$.
The modern proof is due to Kato~\cite{Kato1987}.


Another well known property of \eqref{Eqn-NLS} is the existence of solitary wave (or \emph{soliton}) solutions of the form  $u(t,x)=\phi_{\omega}(x)e^{i\omega t}$. Berestycki and Lions~\cite{BerLions1983} proved that for $V_0 = 0$ there exists a solution $\phi_{\omega}>0$ of
\begin{equation}\label{Eqn-Solitons}
\laplacian\phi_\omega - \omega\phi_\omega +f(\abs{\phi_\omega}^2)\phi_\omega =0,
\end{equation}
provided $\omega>0$ and there exists $u_1>0$ such that
$G(u_1)>\frac{\omega}{2}u_1^2$. Note that this further assumption on
the nonlinearity $g$ is independent of $N$. The positive solution of
\eqref{Eqn-Solitons} is unique and radially symmetric up to translations. See Cazenave's book \cite{Cazenave} and the references therein, particularly McLeod~\cite{McLeod1993}. For $V_0\neq 0$, see Rose and Weinstein~\cite{RosenWeinstein} and articles that cite it.


We are interested in the dynamics and stability of these objects. Because of rotation invariance of equation~\eqref{Eqn-NLS}, the proper notion is \emph{orbital stability}: we say that $\phi_{\omega_0}$ is orbitally stable if for all $\epsilon>0$, there exists $\delta>0$ such that
\[
\norm{u_0-\phi_{\omega_0}}_{H^1}\leq \delta \Longrightarrow \sup_{t>0}\left( \inf_{s\in\R} \Norm{ u(t)-\phi_{\omega_0}e^{is}}_{H^1} \right) \leq\epsilon.
\]
Note that, when the soliton family has the spatial translation symmetry, the solution may move but remain close to the soliton family. In that case, the above concept of orbital stability is not adequate, and the nullspace of the linearized operator (to be discussed in Section~\ref{SubSec-LinOp}) will have extra elements ($\nabla \phi_{\omega}$). One possible remedy is to enlarge the ``orbit'' to contain all translated solitons. Another way, which we choose in this paper, is to restrict ourselves to the cases with no translation. For example, we may assume the perturbation is either radial or even. Alternatively, we may also assume the potential $V_0$ is nonradial and hence the soliton family has no translation symmetry.

We now assume, as is usually the case, that we have a whole \emph{branch} of solitons, that is a family $(\phi_{\omega})_{\omega\in\calI}$ of solitary waves, with $\phi_{\omega}>0$ radial, $\calI=(\underline{\omega},\overline{\omega})$, and a $C^2$ map $\omega\in\calI\mapsto \phi_{\omega}\in H^1(\R^N)$.  Under these assumptions and some generic spectral conditions, for $\omega_0\in\calI$, we have:
\begin{enumerate}[(i)]
\item $\phi_{\omega_0}$ is orbitally stable if ${\left. \frac{d}{d\omega}\int |\phi_{\omega}|^2\, \right|}_{\omega=\omega_0} >0$;
\item $\phi_{\omega_0}$ is orbitally unstable if ${\left. \frac{d}{d\omega}\int |\phi_{\omega}|^2\, \right|}_{\omega=\omega_0} < 0$.
\end{enumerate}

The part (i) is due to Cazenave-Lions~\cite{CazenaveLions1982} and Weinstein~\cite{Weinstein1986}, by variational and energy methods. The part (ii) is due to Shatah and Strauss~\cite{ShatahStrauss}. See also \cite{GSS1987}.
We are interested in \emph{unstable} branches of solitons.

In the case of a pure power \emph{focusing} nonlinearity, $f(s)=+s^{(m-1)/2}$ and $V_0=0$, there exists a profile $\phi_\omega$ for all $\omega>0$.
Moreover, there is scaling invariance, $\phi_{\omega}(x)=\omega^{\frac{1}{m-1}}\phi_1(\sqrt{\omega}x)$, by which we can compute $\norm{\phi_{\omega}}_{L^2} = \omega^{\frac{1}{m-1}-\frac{N}{4}}\norm{\phi_1}_{L^2}$.
We let $m_c=\frac{4}{N}+1$ and note that $\phi_{\omega}$ is stable for all $\omega>0$ when $1<m<m_c$ ($L^2$ subcritical case), and $\phi_{\omega}$ is unstable for all $\omega>0$ when $m_c<m<m_{\max}$ ($L^2$ supercritical case).

When the solitary wave
$\phi_{\omega_0}$ is stable, it is likely that a nearby solution will
relax to some $\phi_{\omega_+}$ as time goes to infinity. The
frequency $\omega_+$ is close to $\omega_0$ but most likely
different. In addition, the convergence happens only locally since
there is radiation going to infinity with unvanishing $L^2$-mass.
Thus one also considers a local concept of stability of the branch: we
say that the branch $(\phi_{\omega})_{\omega\in \calI}$ is {\it
  asymptotically stable} if, for any $u_0$ in a suitable neighbourhood,
\[
\norm{u(t) - \phi_{\omega(t)} e^{i \theta(t)}}_{L^2_{loc}} \longrightarrow 0,
\quad \text{as} \quad t \to \infty,
\]
for some continuous functions $\omega(t) \in \calI$ and $\theta(t) \in \R$.
Asymptotic stability is the main
stability  concept used in the statement of our main theorem.

This paper is organized as follows. Section
\ref{SubSec-Assumptions} will itemize our assumptions in terms of
the nonlinearity and in terms of the evolution operator $\J\boldL$
linearized about $\phi_\omega$.  We then state our main result.
Section \ref{SubSec-Context} will discuss the context for our
theorem and relevant literature. The linearized operator
$\J\boldL$ will be properly introduced in Section
\ref{SubSec-LinOp}, followed by the proof of our main result.

\subsection{Assumptions \& Main Result}
\label{SubSec-Assumptions}

\begin{enumerate}
    \item Solitons exist for $\omega \in \calI$, and, moreover, the
            profiles $\phi_\omega$ and their derivatives $\partial_\omega^a\partial_x^b\phi_\omega$ have exponential decay. We assume the map $\omega\mapsto\phi_\omega\in H^1$ is $C^2$.

    \item For all $\omega\in\calI$, the linearized operator $\J\boldL$ has eigenvalue zero with multiplicity two.  The discrete spectrum contains exactly
            two simple real eigenvalues, $\EVp > 0 > \EVm$, with corresponding eigenfunctions $\Yp$ and $\Ym$ with exponential decay.
            The purely imaginary continuous spectrum is bounded away from zero.
            There is no embedded eigenvalue or resonance in the continuous spectrum or its endpoints.

         As unstable solitons are typically characterized by $\inner{\partial_\omega \phi_\omega}{\phi_\omega}<0$, we assume that $\inner{\partial_\omega \phi_\omega}{\phi_\omega}\neq 0$ with a uniform bound on $\calI$.

    \item Nonlinearity $ g(u):=f(\abs{u}^2)u$ is sufficiently strong.  In addition to \eqref{Eqn-Nonlin}, we require
          \begin{align}
            & m_1 > 1 + \frac{2}{N}\left(1+\sigma_p\right), \label{Eqn-NonlinCondition1} \\
            & m_1 > 1 + \frac{2}{N}\left(1+\frac{2}{\mZero+1}\right), \label{Eqn-NonlinCondition2}\\
            &\mbox{where } \mZero \equiv \min\left\{2, m_1\right\}. \nonumber
          \end{align}
          Here, we use $p$ to denote $\dis  m_2+1 \equiv p < p_{\max} \equiv m_{\max}+1$, and the standard notation $\sigma_r = N\left(\frac{1}{2}-\frac{1}{r}\right)$ and $\frac{1}{r}+\frac{1}{r'}=1$ for any exponent $r$. See equation \eqref{Eqn-SigmaQ}, below, for a related fixed constant $q\in[p,p_{\max})$ determined by both $m_1$ and $m_2$. The choice of $q$ will require $N\geq 2$. 

          Condition \eqref{Eqn-NonlinCondition1} implies $m_1 > \frac{1}{2}+\frac{1}{N}+\frac{\sqrt{N^2+12N+4}}{2N}$, the Strauss exponent. 
          However, we do not approach this value, as condition \eqref{Eqn-NonlinCondition2} implies either $m_1 > 1+\frac{2}{N}\left(\frac{5}{3}\right)$ or $m_1 > \frac{1}{N} + \frac{\sqrt{N^2+6N+1}}{N}$, which are more restrictive. 
          See figures \ref{FigureN3} and \ref{FigureN45}.  We note that conditions \eqref{Eqn-NonlinCondition1} and \eqref{Eqn-NonlinCondition2} admit the entire range of $L^2$-critical and supercritical exponents, $1+\frac{4}{N}\leq m_1 \leq m_2<\frac{N+2}{N-2}$.

          Relations \eqref{Eqn-NonlinCondition1} and \eqref{Eqn-NonlinCondition2} are required to complete bootstrap estimates of the $L^p$ and $\Lloc$ norms, respectively. The arithmetic consequences of \eqref{Eqn-NonlinCondition1} and \eqref{Eqn-NonlinCondition2} are discussed in Section \ref{Section-Numerology}.

\begin{figure*}[htpb]
\centering
\includegraphics[width=0.9\textwidth]{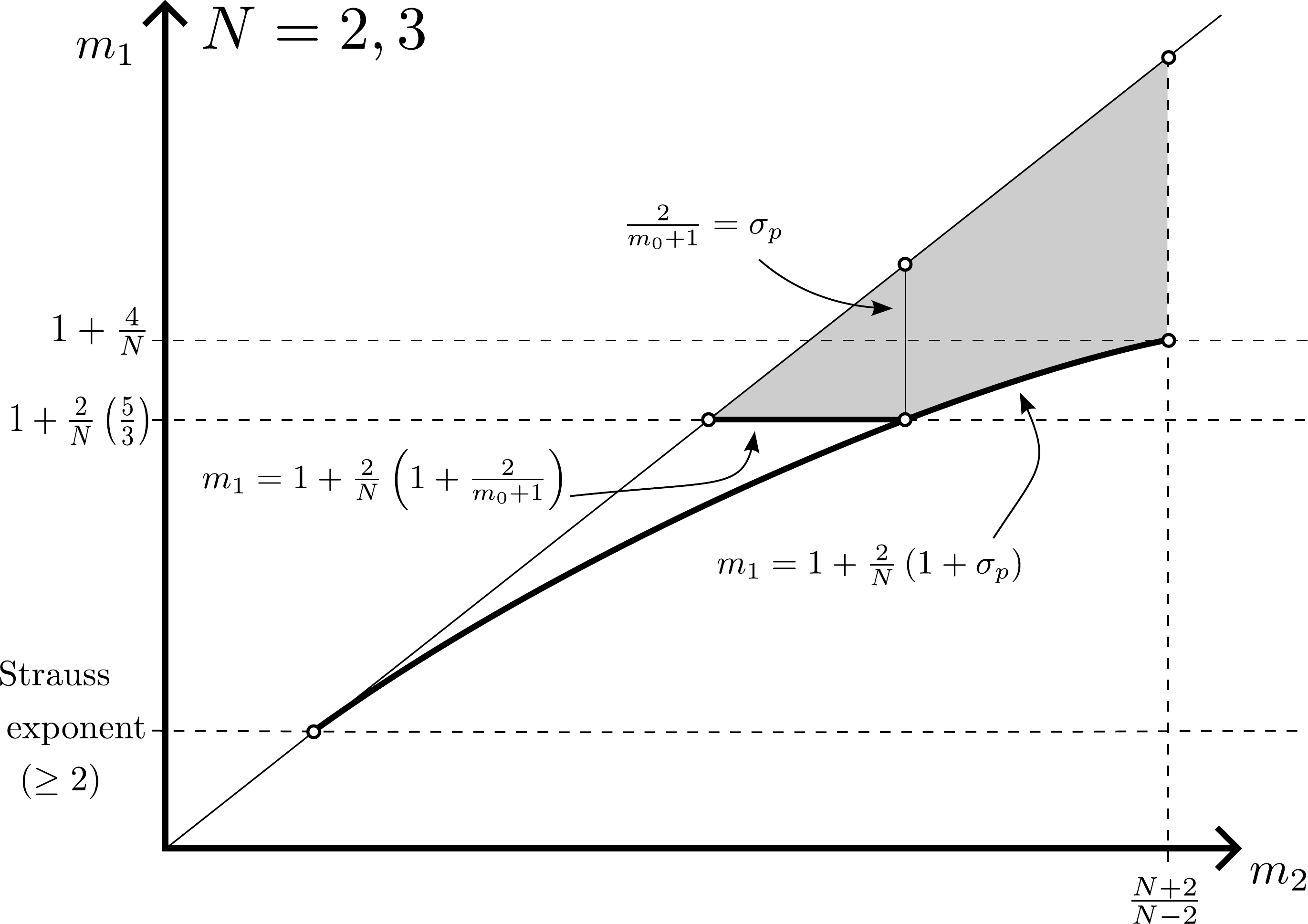} 
\caption{Shaded region illustrates conditions \eqref{Eqn-NonlinCondition1} and \eqref{Eqn-NonlinCondition2} for $N=2,3$.}
\label{FigureN3}
\end{figure*}
\begin{figure*}[htpb]
\centering
\includegraphics[width=0.90\textwidth]{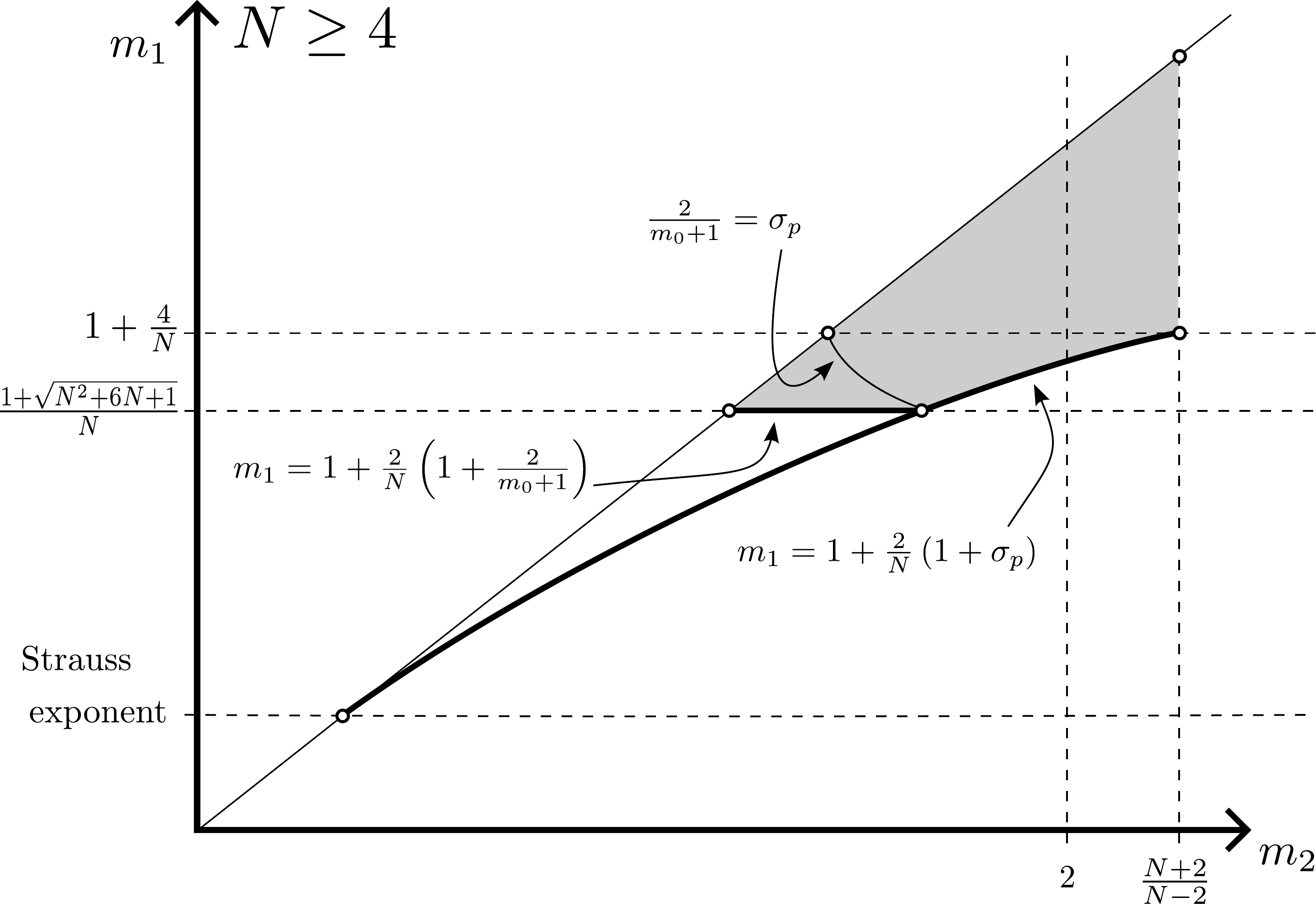}
\caption{Shaded region illustrates conditions \eqref{Eqn-NonlinCondition1} and \eqref{Eqn-NonlinCondition2} for $N\geq 4$.}
\label{FigureN45}
\end{figure*}

    \item There is an adequate dispersive estimate, uniformly in $\omega$:
        \begin{equation}\label{Eqn-AssumedDecay}
            \sup_{\omega\in\calI}\norm{e^{t\J\boldL}\calP_c\xi}_{L^r} \lesssim t^{-\sigma_r}\norm{\xi}_{L^{r'}},
        \end{equation} 
                where $\calP_c$ is the continuous spectral projection with respect to the linearized operator $\J\boldL$ (depends on $\omega$). See \eqref{Eqn-EtaProjection} below for details.
                We only require estimate \eqref{Eqn-AssumedDecay} for $r = p,q$, which excludes endpoint cases.

                For $V_0=0$, \eqref{Eqn-AssumedDecay} is a consequence of the spectral assumption.
                For $N>2$, see Cuccagna~\cite[Corollary 2.2]{Cuccagna2001}, built on the earlier works of Yajima~\cite{Yajima1995no1, Yajima1995no3}.
                For $N=2$, see Cuccagna and Tarulli~\cite{CuccagnaTarulli2009}. 

                The class of non-trivial potentials $V_0$ for which \eqref{Eqn-AssumedDecay} holds is unknown.


\end{enumerate}

All assumptions are known to be true for the monic cubic focusing equation in three dimensions. The lack of embedded eigenvalues is due to a numerically assisted proof by Marzuola and Simpson~\cite{MarzuolaSimpson}.  All assumptions are expected to be true for any monic $L^2$-supercritical and energy-subcritical equation, and for their perturbations.  Therefore, after rescaling, the assumptions should hold for sums of two monic nonlinearities in certain ranges of $\omega$.  Indeed, the spectral assumption is partially known for the cubic-quintic nonlinearity in exactly such a situation. See Asad and Simpson~\cite{AsadSimpson}. See Hundertmark and Lee~\cite{HundertmarkLee2007} regarding the decay of $\Yp$, $\Ym$.

{{
\begin{theorem}\label{Thm-MainResult}
Let $1<N<\infty$ and suppose the above assumptions are valid for
$\omega$ in an interval $\calI$ with uniform estimates. There
exist positive constants $\overline{\alpha_0} =
\overline{\alpha_0}(f,\calI)$ and $C=C(f,\calI)$ such that the
following holds.

For any $\alpha_0 \in (0,\overline{\alpha_0}]$, consider initial
data $u_0$ for which there exist $\theta_0\in\R$ and $\omega_0\in
\calI$ with dist$(\omega_0,\partial \calI)>C \alpha_0$, such
that:
\[0<\alpha \equiv \norm{u_0-\phi_{\omega_0}e^{i\theta_0}}_{H^1\cap L^1}
                        < \alpha_0.\]
Either:
\begin{enumerate}
    \item {\bf (escape case)}\quad there exist $\omega_+\in\calI$,  $\abs{\omega_0-\omega_+}\le C \alpha_0$, $\theta_+\in [0,2\pi]$ and finite time $\TEx > 0$, such that,
    \[ \norm{u(\TEx)-\phi_{\omega_+}e^{i\theta_+}}_{L^2_{loc}} = \inf_{\omega\in \calI,\theta\in\R} \norm{u(\TEx)-\phi_{\omega}e^{i\theta}}_{L^2_{loc}} \geq 2\alpha_0,\quad\mbox{ or,}\]
    \item {\bf (convergence case)}\quad there exist $\omega_+\in\calI$,  $\abs{\omega_0-\omega_+}\le C \alpha$, $t\mapsto \theta(t)$ continuous and $\muThm=\muThm(f)>0$, such that,
    \[\norm{u(t)-\phi_{\omega_+}e^{i\theta(t)}}_{L^p} \leq C\alpha \bka{t}^{-\muThm} \quad\mbox{ for all }t>0.\]
\end{enumerate}
\end{theorem}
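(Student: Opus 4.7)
The plan is a modulation plus spectral decomposition argument, in the spirit of classical orbital/asymptotic stability analyses adapted to the unstable setting. First, I write
\[ u(t,x) = \bke{\phi_{\omega(t)}(x) + \eta(t,x)} e^{i\theta(t)}, \]
and fix the modulation parameters $\omega(t),\theta(t)$ by imposing symplectic orthogonality of $\eta$ to the generalized nullspace of $\J\boldL$, spanned by $\partial_\omega\phi_\omega$ and $i\phi_\omega$. The uniform non-vanishing of $\inner{\partial_\omega\phi_\omega}{\phi_\omega}$ makes this well-posed by an implicit function theorem as long as $u$ stays near the orbit, and produces modulation ODEs in which $\dot\omega$ and $\dot\theta-\omega$ are quadratic or higher in $\eta$. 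I then split $\eta$ along the spectral decomposition of $\J\boldL$,
\[ \eta(t) = \Bp(t)\Yp + \Bm(t)\Ym + \xi(t),\qquad \xi = \calP_c\eta, \]
so that $\xi$ satisfies a linear Schr\"odinger-type equation forced by $\calP_c$ applied to the nonlinearity and modulation errors, while the discrete coordinates obey $\dBp = \EVp \Bp + \calN_+(\eta)$ and $\dBm = \EVm \Bm + \calN_-(\eta)$ with $\calN_\pm$ at least quadratic in $(\Bp,\Bm,\xi)$.

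The core dispersive estimates then propose bootstrap bounds of the form $\norm{\xi(t)}_{L^p}\lesssim \alpha\bka{t}^{-\muP}$ and $\norm{\xi(t)}_{L^q}\lesssim \alpha\bka{t}^{-\muQ}$ for a suitable interval of time, with the auxiliary exponent $q\in[p,p_{\max})$ chosen to gain extra decay. Feeding these into the Duhamel formula for $\xi$ using \eqref{Eqn-AssumedDecay}, and estimating the nonlinearity $N(\eta)$ via H\"older against $\norm{\eta}_{L^{r}}^{m_i-1}$, I can close the bootstrap provided the kernels $\int_0^t \bka{t-s}^{-\sigma_r} \bka{s}^{-\gamma}\,ds$ integrate to the advertised rate; this is precisely where the arithmetic conditions \eqref{Eqn-NonlinCondition1} and \eqref{Eqn-NonlinCondition2} are needed. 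Coupled with the orthogonality-derived bound $\abs{\dot\omega} + \abs{\dot\theta-\omega}\lesssim \norm{\eta}^2$, this both controls $\xi$ and shows that $\omega(t)$ drifts slowly.

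With $\xi$ controlled, the residual two-dimensional dynamics $(\Bp,\Bm)$ is an approximately hyperbolic saddle. The unstable coordinate $\Bp$ drives the dichotomy asserted in Theorem~\ref{Thm-MainResult}: if $\abs{\Bp(t)}$ ever reaches a threshold comparable to $\alpha_0$, exponential growth drives $\norm{u-\phi_{\omega(t)}e^{i\theta(t)}}_{L^2_{loc}}$ past $2\alpha_0$ at some finite $\TEx$, yielding the escape case after verifying that the infimum over $(\omega,\theta)$ is attained and that $\abs{\omega_0-\omega_+}\le C\alpha_0$ (this uses $\dist(\omega_0,\partial\calI)>C\alpha_0$). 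Conversely, on the complementary regime $\abs{\Bp(t)}$ stays dominated by $\abs{\Bm(t)}+\norm{\xi(t)}$; then $\Bm$ decays by its contractive linear part, $\xi$ decays dispersively, and a standard argument shows $\omega(t)\to\omega_+$ with the $L^p$ rate $\bka{t}^{-\muThm}$, giving the convergence case. The book-keeping is organized by comparing the times $\TEx, \TC, \TD, \THy$ introduced in the notation block, which represent respectively an exit time, a critical threshold, a dispersive-dominance window, and a hyperbolic-dominance window.

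The hard part will be closing the $\xi$ bootstrap in the presence of the unstable mode. Quadratic couplings from $\Bp^2 \Yp\otimes\Yp$ and $\Bp\,\xi$ can, if unchecked, feed energy into the dispersive part and destroy the postulated decay long before $\TEx$ is reached; one must show that either (i) in the escape regime, $\TEx$ precedes any such breakdown of the bootstrap, by exploiting that $\norm{\xi(t)}_{L^p}$ grows no faster than $\abs{\Bp(t)}$ during the hyperbolic phase, or (ii) in the convergence regime, $\Bp$ remains dominated and the integral feedback is summable against the dispersive kernel. Matching the admissible $\muThm$ to both the $L^p$- and $\Lloc$-bootstraps simultaneously is what forces the two arithmetic constraints \eqref{Eqn-NonlinCondition1}--\eqref{Eqn-NonlinCondition2}; in particular the choice of $q$ (and its exponent $\sigma_q$) must be tuned against both $m_1$ and $m_2$, and I would expect the numerology of Section~\ref{Section-Numerology} to be the technical heart of the proof.
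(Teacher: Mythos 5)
Your proposal follows essentially the same route as the paper: the same modulation and spectral decomposition, the same bootstrap on $\Bm$, $\norm{\eta}_{L^p}$, $\norm{\eta}_{L^q}$ fed through the Duhamel formula with the assumed dispersive estimate, and the same dichotomy driven by the unstable coordinate, with the arithmetic conditions \eqref{Eqn-NonlinCondition1}--\eqref{Eqn-NonlinCondition2} entering exactly where you predict. Two refinements in the paper worth noting: the dichotomy is triggered not by $\Bp$ reaching a size $\sim\alpha_0$ but by the first failure of the \emph{decaying} bound $\abs{\Bp(t)}<\alpha\bka{t}^{-1}$ (after which hyperbolicity forces exponential growth up to $3\alpha_0$, ruling out any intermediate regime), and the time-dependence of the linearized operator is handled by freezing it at $\omega(T)$ and using a Buslaev--Perelman decomposition of $\calP_{c,T}\J$ to absorb the $(\dot{\theta}-\omega)\J\eta$ correction, a technical device your sketch does not address.
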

}}
Recall the standard notation $\bka{t}=\sqrt{1+t^2}$ for $t\in\R$, and that $p=m_2+1$ is the largest exponent in the potential energy. Since $L^p\hookrightarrow L^2_{loc}$, the convergence case is an asymptotic stability result. Let us emphasize that the estimates must be uniform over $\calI$.  In particular, the real eigenvalues $e_+,e_-$ and $\inner{\partial_\omega \phi_\omega}{\phi_\omega}$ are uniformly bounded away from zero.
The restriction dist$(\omega_0,\partial \calI)>C \alpha_0$
ensures that both $\omega_+\in\calI$ and $\omega(t)\in\calI$,
where $\omega(t)$ will be defined by \eqref{Eqn-Decomp1}.

\subsection{Context and Importance}\label{SubSec-Context}

Asymptotic stability of orbitally stable solitary waves is well
studied and has a vast, growing literature, initiated by Soffer-Weinstein~\cite{SofferWeinsteinI,SofferWeinsteinII} and Buslaev-Perelman~\cite{Buslaev1995}.

%
%

For unstable solitary waves, the classical result of
Glassey~\cite{Glassey} shows the existence of finite time blow-up for
pure power nonlinearities, with no description on the nature of the
blow-up. The general result of Shatah-Strauss~\cite{ShatahStrauss}
exhibits solutions which are initially arbitrarily close to the
solitary waves but leave their neighbourhood in finite times.  Also
see Comech-Pelinovsky~\cite{ComechPelinovsky2003}, who give a similar
result when $\omega_0$ is the borderline between stable and unstable
branches.  There are also results showing the existence of stable
(or center-stable) manifolds, solutions which converge to the unstable solitary wave, see
e.g.~\cite{TsaiYau2,KriegerSchlag2006,Schlag2009,Beceanu2012}.
Solutions on a stable manifold are necessarily nongeneric. Indeed,
there are few results addressing {\it all} solutions with initial data
in a neighbourhood of unstable solitary waves.  There are some
exceptions:
\begin{enumerate}
\item Small solitary waves obtained from a linear potential.

A complete description of asymptotic behaviour is known in some cases.  See \cite{TsaiYau4, NakanishiPhanTsai}.

\item Solitary waves of the pure-power $L^2$-critical nonlinearity, $g(u) = |u|^\frac{4}{N}u$.

  There exist disjoint open subsets $\calK^\pm\subset H^1$ such that the solitary waves belong to $\overline{\calK^+}\cap\overline{\calK^-}$.  In some cases, we know that solutions in $\calK^+$ scatter. See Killip, Tao and Visan~\cite{KillipTaoVisan} and papers that refer to it. On the other hand, solutions in $\calK^-$ blowup in finite time, as proved by Merle and Rapha\"el~\cite{Merle2005a} following a couple decades of careful asymptotic arguments. These blowup solutions are precisely described in terms of a soliton profile and a tracking error.  The tracking error is arbitrarily small, and converges, in $L^2$.  Indeed, it converges in $H^1$ outside any ball of fixed radius around the blowup point. The primary growth of $H^1$ norm is captured by the soliton profile $\phi_{\omega(t)}$, for which $\norm{\phi_{\omega(t)}}_{\dot{H}^1}\to\infty$ as $\omega(t)\to\infty$.

  The $L^2$-critical nonlinearity is a degenerate case with physical relevance.  The related literature is very large.

\item Solitary waves of the $\dot{H}^\frac{1}{2}$-critical
  nonlinearity, $g(u) = |u|^\frac{4}{N-1}u$.



In this case, the product $M(u)E(u)$ is invariant under the natural
scaling.  Duyckaerts, Holmer and Roudenko
\cite{HR2008,DHR2008,DR2012} show that all solutions scatter when
$M(u)E(u)$ is less than that of the ground state, expanding on the
energy-critical argument of Kenig and Merle~\cite{KenigMerle}.
For $N=3$, $g(u) = \abs{u}^2u$, and radial data with $M(u)E(u)$ at
most slightly above that of the ground state, Nakanishi and Schlag
\cite{NakanishiSchlag} show that the sets of data leading to
scattering and blow-up are bordered by the center-stable manifold,
that these three are the only possible positive time asymptotics, and
that all nine possibilities as $t \to \pm \infty$ exist.

\item
For slightly $L^2$-supercritical nonlinearities, $g(u) =
\abs{u}^{\frac{4}{N}+\delta}u$, Merle, Rapha\"el and
Szeftel~\cite{MRS2009} have shown that the $L^2$-critical blowup
regime survives as sets of initial data~${\mathcal O}$, open in $H^1$,
for which blowup occurs and can be described in terms of a member of
the soliton family and a tracking error.
%
%
At blowup time, the tracking error converges in all
subcritical\footnote{For a pure-power nonlinearity, $g(u) =
  \abs{u}^{\alpha}u$, the critical norm is
  $L^{N\alpha/2}$.}  norms to a fixed residue which
is outside the critical space. This agrees with the more general
result of Merle and Rapha\"el~\cite{MR2007_CritNorm} that the critical
norm of radially symmetric blowup solutions is unbounded.  Moreover,
there is a universal lower bound for the size of the residue in
$L^2_{loc}$.
Should blowup with a soliton profile occur in any other
$L^2$-supercritical problems, Theorem \ref{Thm-MainResult} suggests
there should be a similar $L^2_{loc}$ lower bound on any residue.

\end{enumerate}

Should equation \eqref{Eqn-NLS} lead to blowup, it may be structurally perturbed by a vanishing multiple of $u\abs{u}^{m_2-1+}$ to be globally wellposed. As a result, Theorem \ref{Thm-MainResult} shows that the qualitative dynamic near a branch of unstable solitons is universal, irrespective of whether blowup eventually occurs.  We do not exclude the possibility of solutions that blowup with a soliton profile, following an unstable branch of solitons at some distance. Should such solutions exist, Theorem \ref{Thm-MainResult} suggests that the blowup dynamic is only observed once they lie outside a particular neighbourhood of the manifold.

While we are concerned with large solitons, our approach will be similar to the small-soliton case. Kirr, M\i zrak and Zarnescu~\cite{KirrZarnescu2D,KirrMizrak3D,KirrMizrak4D} consider the nonlinear Schr\"odinger equation with potential in dimensions two to five and detail the convergence to a center manifold of small stable solitons.  Since they do not require \eqref{Eqn-NonlinCondition2}, their work admits a larger range of nonlinearities down to the Strauss exponent.  The technique in all three papers is focused on time-dependent linear operators, which we avoid, and is strictly limited to small solitons.  Recent work of Beceanu~\cite{Beceanu} may offer a new route to soliton stability results in the $H^1$-setting. We do not know if Theorem \ref{Thm-MainResult} is optimal.

Our approach recovers the asymptotic stability of large solitary waves with no non-zero eigenvalue, for radial perturbations:
\begin{proposition}\label{Prop-ZeroEVOnly}
Let $1<N<\infty$, suppose that for $\omega\in\calI$ the linearized operator $\J\boldL$ has eigenvalue zero with multiplicity two, no other discrete spectra, and that all other assumptions are valid with uniform estimates.  Then the result of Theorem \ref{Thm-MainResult} holds. Moreover, only the convergence case occurs.
\end{proposition}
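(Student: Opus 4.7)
The plan is to specialize the proof of Theorem \ref{Thm-MainResult} to the case where the unstable eigenspace $\mathrm{span}\{\Yp,\Ym\}$ is absent from the spectral decomposition of $\J\boldL$. In that proof, a solution near the soliton manifold is written in the form
\[
u(t) = \bigl(\phi_{\omega(t)} + \eta(t)\bigr)e^{i\theta(t)},
\]
and $\eta$ is further decomposed into its projections onto the generalized nullspace (the tangent space of the soliton manifold), onto $\Yp,\Ym$, and onto the continuous spectral subspace via $\calP_c$. Under the hypotheses of Proposition \ref{Prop-ZeroEVOnly}, the second component is absent: there are no coefficients $\Bp(t),\Bm(t)$ to track, and the bootstrap estimates for the dispersive part $\calP_c\eta$ simplify accordingly, losing precisely the nonlinear source terms generated by the unstable modes.

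First I would set up the modulated decomposition as in Section \ref{SubSec-LinOp}, imposing the standard symplectic orthogonality of $\eta$ to $\partial_\omega\phi_\omega$ and $i\phi_\omega$. This fixes $\omega(t)$ and $\theta(t)$ as long as $u$ remains in a tubular neighbourhood of the branch, and the uniform assumption $\inner{\partial_\omega\phi_\omega}{\phi_\omega}\neq 0$ gives a uniform implicit-function bound. The modulation equations for $\dot\omega$ and $\dot\theta-\omega$ are quadratic in $\eta$, since the projection of the nonlinear remainder onto the generalized nullspace vanishes to first order; once the $L^p$ decay of $\eta$ is known these quantities are integrable and yield continuous $\theta(t)$ together with a limit $\omega_+=\lim_{t\to\infty}\omega(t)$ satisfying $|\omega_+-\omega_0|\lesssim\alpha$.

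Next, the Duhamel formula for $\calP_c\eta$ reads
\[
\calP_c\eta(t) = e^{t\J\boldL}\calP_c\eta(0) + \int_0^t e^{(t-s)\J\boldL}\,\calP_c\,\calN(s)\,ds,
\]
where $\calN$ is the nonlinear source produced after modulation and has the $L^{p'}\cap L^{q'}$ regularity required to invoke \eqref{Eqn-AssumedDecay}. Conditions \eqref{Eqn-NonlinCondition1} and \eqref{Eqn-NonlinCondition2} are tuned precisely so that the bootstrap for $\|\eta\|_{L^p}$ and $\|\eta\|_{\Lloc}$ closes with polynomial decay $\bka{t}^{-\muThm}$. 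Because $\Yp$ is not present, there is no exponentially growing coefficient $\Bp$ to saturate first; the small-data argument runs directly and the bootstrap is in fact strictly easier than in the main theorem. The hypothesis $\dist(\omega_0,\partial\calI)>C\alpha_0$ keeps $\omega(t)\in\calI$ for all $t\ge 0$.

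Finally, to rule out the escape case, observe that in the proof of Theorem \ref{Thm-MainResult} the exit time $\TEx$ is defined as the first instant at which the unstable coefficient $\Bp(t)$ reaches a threshold of order $\alpha_0$. In the present setting this trigger does not exist, so the solution remains inside the tubular neighbourhood for all time and the dichotomy of Theorem \ref{Thm-MainResult} collapses to the convergence case. The only subtlety I anticipate is checking that every constant appearing in the bootstrap remains uniform in $\omega\in\calI$ under the modified spectral assumption; this follows from the uniform dispersive estimate \eqref{Eqn-AssumedDecay} and the uniform bound on $|\inner{\partial_\omega\phi_\omega}{\phi_\omega}|$ already built into assumption (2), with the $\Yp,\Ym$-dependent constants simply dropped.
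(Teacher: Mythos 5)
Your proposal is correct and is essentially the argument the paper intends: with no nonzero discrete spectrum the decomposition \eqref{Eqn-Decomp1} has $\Bp\equiv\Bm\equiv 0$, the critical time $\TC$ defined by \eqref{Eqn-Defn-TC} can never precede $\TD$, and the convergence-case bootstrap of Section \ref{Section-Convergence} closes exactly as before (indeed with fewer source terms), so only the convergence alternative survives. The one point to carry over carefully from the main proof is that the Duhamel formula must be written with the frozen operator $\J\boldL_T$ and the phase correction of Section \ref{Subsec-BPEstimate} (Proposition \ref{Prop-BuslaevPerelman}), rather than with the time-dependent generator as in your displayed formula, but since you are explicitly specializing that argument this is only a notational shortcut.
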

For small solitons, potentials $V_0$ with linearized operators as in Proposition \ref{Prop-ZeroEVOnly} exist.  Proposition \ref{Prop-ZeroEVOnly} then recovers a range of nonlinearities covered by Kirr, M{\i}zrak and Zarnescu.

We finally note that, when a certain normal form of a
spectrally-stable sign-changing solitary wave has mixed signs, it is
asserted in Cuccagna \cite[Remark 10.8]{Cuccagna2011} that it is
orbitally unstable and a dichotomy result similar to that of Theorem
\ref{Thm-MainResult} holds.

\section{Decomposition and Algebraic Relations}\label{Section-ProofPrep}
First, in Section \ref{SubSec-LinOp}, we properly introduce the linearized operator $\J\boldL$.  Second, in Section~\ref{SubSec-Decomp}, we decompose the solutions $u(t)$.  This will allow us to phrase the bootstrap argument for Theorem \ref{Thm-MainResult}, explained in Section \ref{SubSec-Bootstrap}. We then introduce particular tools in preparation for the following chapters.  We state the dynamic equations of the modulation parameters in Section \ref{Subsection-DynamicEqns}, the tracking-error equation in Section \ref{Subsection-TrackingError}, and a Buslaev-Perelman decomposition and estimate of the continuous spectral projection operator of $\J\boldL$ in Section \ref{Subsec-BPEstimate}.  Finally, in Section \ref{Section-Numerology}, we introduce decay-rate constants and verify associated arithmetic.

\subsection{Linearized Operator}\label{SubSec-LinOp}

Expand the potential around $\phi$,
\begin{equation}\label{Eqn-Expansion}\begin{aligned}
g(\phi+\epsilon) &= g(\phi) + Dg(\phi)\epsilon + \calN(\epsilon) \\
                 &= \phi f(\abs{\phi}^2)
                    +\left(\epsilon f(\abs{\phi}^2)  + \phi f'(\abs{\phi}^2)(\phi\overline{\epsilon}+\overline{\phi}\epsilon)\right)
                    +\calN(\epsilon).
\end{aligned}\end{equation}
Immediately, we recognize the linearized operator
\[
\scrptL(\epsilon) = -\laplacian \epsilon + V_0\epsilon + \omega\epsilon - \left(\epsilon f(\abs{\phi}^2)  + \phi f'(\abs{\phi}^2)(\phi\overline{\epsilon}+\overline{\phi}\epsilon)\right)
\]
around $\phi$, which satisfies $i\partial_t \epsilon = \scrptL(\epsilon)+\bigO(\epsilon^2)$.
Equation \eqref{Eqn-Nonlin} allows us to estimate the nonlinear
term (e.g. \cite[equation (22)]{KirrMizrak4D}),
\begin{equation}\label{Eqn-NonlinConseq}\begin{aligned}
    \abs{\calN(\epsilon)}  &\lesssim A_1\abs{\phi}^{m_1-2}\abs{\epsilon}^2 + A_2\abs{\phi}^{m_2-2}\abs{\epsilon}^2 + \abs{\epsilon}^{m_1} + \abs{\epsilon}^{m_2},
  \end{aligned}\end{equation}
where $A_j = 0$ if $m_j \le 2$.

\begin{remark}[Complex-valued Functions as Vectors]
Consider complex-valued functions $u = u_1 + i u_2$ and $v=v_1+i v_2$, which we write as $\real^2$-valued functions: $\svec{u} = \twovec{u_1}{u_2}$ and $\svec{v} = \twovec{v_1}{v_2}$. The correct inner product is
\[
\inner{\twovec{u_1}{u_2}}{\twovec{v_1}{v_2}}
    = \inner{u_1}{v_1} + \inner{u_2}{v_2}
    =\int{(u_1\overline{v_1} + u_2\overline{v_2})\,dx}.
\]
We include the complex conjugate of $v$ since we will later consider $\complex^2$-valued functions.
We denote the symplectic operator by
\[\begin{aligned}
\J = \left[\begin{matrix}0&+1\\-1&0\end{matrix}\right].
\end{aligned}\]
\end{remark}

Write $\epsilon = \epsilon_1 + i\epsilon_2$, and represent $\epsilon$ as $\svec{\epsilon} = \twovec{\epsilon_1}{\epsilon_2}$. Then,
\[
\svec{\scrptL(\epsilon)} = \left[\begin{matrix}L_+&0\\0&L_-\end{matrix}\right]\twovec{\epsilon_1}{\epsilon_2}
\equiv \boldL\svec{\epsilon}
\equiv \left((-\laplacian+\omega)\boldI + \boldV\right)\svec{\epsilon},
\]
where
\[\begin{aligned}
L_+ = -\laplacian +V_0 + \omega - \left(f(\phi^2) + 2\phi^2f'(\phi^2)\right),
&&
L_- = -\laplacian +V_0 + \omega - f(\phi^2).
\end{aligned}\]
The term $\boldI$ denotes the identity, and the potential term $\boldV$ has localized support and is written separately in anticipation of equation \eqref{Eqn-EtaTilde}.
Note that $\boldL$ is self-adjoint, and that the linearization of \eqref{Eqn-NLS} near $\phi$ will feature $\J\boldL$. The kernel of $\J\boldL$ can be found by inspection: $L_-\phi = 0$ and $L_+\partial_\omega\phi = -\phi$. In vector notation:
\[\begin{aligned}
\J\boldL\twovec{\partial_\omega\phi}{0} = \twovec{0}{\phi}, && \J\boldL\twovec{0}{\phi} = 0.
\end{aligned}\]
Let $\Yp = \Yr + i\Yi$ be the eigenfunction associated with $\EVp$:
\begin{equation}\label{Eqn-YpSpecifics}\begin{aligned}
\J\boldL\svec{\Yp} = \EVp\svec{\Yp}
&&\Rightarrow&&
L_-\Yi = \EVp\Yr, &&   L_+\Yr = -\EVp \Yi.
\end{aligned}\end{equation}
Let $\Ym$ be an eigenfunction associated with $\EVm$.
From \eqref{Eqn-YpSpecifics}, one can verify $\Ym = \overline{\Yp}$ and $\EVm = -\EVp$.
Since $L_-$ is non-negative, and $L_-\Yi = \EVp \Yr$, we note that $\inner{\Yr}{\Yi} = \frac{1}{\EVp}\inner{L_-\Yi}{\Yi} > 0$. Without loss of generality we specify $2\inner{\Yr}{\Yi} = +1$.
This describes the discrete eigenfunctions of $\J\boldL$. Note that
\begin{equation}\label{Orthog0}\begin{aligned}
\inner{\svec{Y_\pm}}{\J\twovec{0}{\phi}} = 0
&& \text{ and } &&
\inner{\svec{Y_\pm}}{\J\twovec{\partial_\omega\phi}{0}} = 0.
\end{aligned}\end{equation}

\subsection{Orthogonality Conditions}\label{SubSec-Decomp}
We assume the decomposition
\begin{equation}\label{Eqn-Decomp}
u(t) = \left(\phi_{\omega(t)} + a(t)\partial_\omega\phi_{\omega(t)} + \Bp(t)\Yp + \Bm(t)\Ym + \eta(t)\right)e^{i\theta(t)},
\end{equation}
where the modulation parameters $\omega(t), \theta(t), a(t), \Bp(t), \Bm(t) \in \real$ are continuous functions determined by enforcing orthogonality conditions\footnotemark.
To determine the parameters:

\footnotetext{By way of comparison with Buslaev-Sulem, note that the coefficients $b_\pm(t)$ and the components of $\svec{Y_\pm}$ are real-valued. In the case of stability and eigenvalues $\pm i\lambda$, the second component of the eigenfunction in vector form is purely imaginary (corresponding to an entirely real-valued eigenfunction), and the appropriate decomposition is $z(t)\svec{\psi} + \overline{z(t)}\svec{\overline{\psi}}$.}

\begin{equation}\label{Orthog1}
\inner{\svec{\eta}}{\J\twovec{0}{\phi}} = 0
\end{equation}
\begin{equation}\label{Orthog2}
\inner{\svec{\eta}}{\J\twovec{\partial_\omega\phi}{0}} = 0
\end{equation}
\begin{equation}\label{Orthog3}
\inner{\svec{\eta}}{\J\svec{\Yp}} = 0
\end{equation}
\begin{equation}\label{Orthog4}
\inner{\svec{\eta}}{\J\svec{\Ym}} = 0
\end{equation}
Parameters $\omega(t)$ and $a(t)$ are not independent; see the proof of Lemma \ref{Lemma-Modulation}, below. We choose to fix $a(t)=0$. As a consequence, the linearized operator $\J\boldL$, the eigenfunctions $\Yp$, $\Ym$, and their associated eigenvalues, are all themselves functions of time through $\omega(t)$.  To simplify notation, this dependence is usually omitted, as in \eqref{Eqn-Decomp}.
When we consider a fixed operator, associated with some fixed value $\omega(T)$, we will refer to
the associated linearized operator as $\boldL_T$ and the eigenvalues as $e_{\pm,\omega(T)}$.

The chosen orthogonality conditions are with the eigenfunctions of the adjoint of $\J\boldL$. These conditions will allow an easy derivation of the dynamical equations in Section \ref{Subsection-DynamicEqns}.  Indeed, $\eta$ is the projection onto the continuous spectrum of $\J\boldL$,
\begin{equation}\label{Eqn-EtaProjection}
\eta = \calP_c\left(e^{-i\theta(t)}u(t)\right),
\end{equation}
whenever the orthogonality conditions uniquely determine $\eta$.

To see that this is the case, let $X_0$ denote the $\J\boldL$-invariant subspace associated with the generalized kernel, and $X_1$, $X_c$ the subspaces associated with eigenvalues $\EVpm$ and the continuous spectrum respectively, so that $L^2 = X_0 \oplus X_1 \oplus X_c$.
Let $\calP_0$, $\calP_1$ and $\calP_c = \Id -\calP_0-\calP_1$ denote the projection operators onto $X_0$, $X_1$ and $X_c$ respectively. Explicitly,
\begin{equation}\label{Eqn-DiscreteProj}\begin{aligned}
&\calP_0\svec{f} = \frac{1}{\frac{1}{2}\partial_\omega\norm{\phi}_{L^2}^2}\left(
    \inner{\svec{f}}{\J\twovec{\partial_\omega\phi}{0}}\twovec{0}{\phi}
    - \inner{\svec{f}}{\J\twovec{0}{\phi}}\twovec{\partial_\omega\phi}{0}
\right),\\
&\calP_1\svec{f} = \frac{\inner{\svec{f}}{\svec{\Yp}}}{2\inner{\Yr}{\Yi}}\J\svec{\Ym} -
\frac{\inner{\svec{f}}{\svec{\Ym}}}{2\inner{\Yr}{\Yi}}\J\svec{\Yp},
\end{aligned}\end{equation}
and we note that $\calP_c\svec{f}$ satisfies the orthogonality conditions.

\begin{lemma}[Ability to Modulate]\label{Lemma-Modulation}
Fix $\omega_1\in \calI$, $\theta_1\in \R$ and Banach space $L^r$,
$1\le r < \infty$. Then, for $u$ in some neighbourhood of
$\phi_{\omega_1}e^{i\theta_1}$, there exists a Lipschitz map
$u\mapsto (\theta,\omega,\Bp,\Bm)$ such that, under decomposition
\eqref{Eqn-Decomp}, orthogonality conditions
\eqref{Orthog1}-\eqref{Orthog4} are satisfied.
The radius of the neighbourhood is of the order $\dist(\omega_1, \partial\calI)$ as $\dist(\omega_1, \partial\calI)\to 0^+$.
\end{lemma}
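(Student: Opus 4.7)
The plan is to apply the implicit function theorem to the nonlinear map that encodes the four orthogonality conditions, viewed as a function of the four modulation parameters $(\theta,\omega,\Bp,\Bm)$.  Fixing $a\equiv 0$, the decomposition \eqref{Eqn-Decomp} uniquely defines
\[
\eta(\theta,\omega,\Bp,\Bm;u) \;=\; e^{-i\theta} u \;-\; \phi_{\omega} \;-\; \Bp \Yp \;-\; \Bm \Ym,
\]
and the orthogonality conditions \eqref{Orthog1}--\eqref{Orthog4} are exactly $F=0$ for the $\R^4$-valued map
\[
F(\theta,\omega,\Bp,\Bm;u) \;=\; \Bigl(\,\inner{\svec{\eta}}{\J\twovec{0}{\phi}},\;\inner{\svec{\eta}}{\J\twovec{\partial_\omega\phi}{0}},\;\inner{\svec{\eta}}{\J\svec{\Yp}},\;\inner{\svec{\eta}}{\J\svec{\Ym}}\Bigr).
\]
At the reference point $u=\phi_{\omega_1}e^{i\theta_1}$ with $\omega=\omega_1$, $\theta=\theta_1$, $\Bp=\Bm=0$, one has $\eta=0$, so $F=0$.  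Since $\phi$, $\partial_\omega\phi$, and $Y_\pm$ decay exponentially, $F$ is a smooth map of the modulation parameters and Lipschitz in $u\in L^r$ for every $1\le r<\infty$.

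The main work is to verify that the Jacobian $D_{(\theta,\omega,\Bp,\Bm)}F$ is invertible at the reference point with a uniform lower bound as $\omega_1$ varies in $\calI$.  I would compute the column derivatives: $\partial_\theta \svec{\eta}|_{0}=\twovec{0}{-\phi}$, $\partial_\omega\svec{\eta}|_{0}=\twovec{-\partial_\omega\phi}{0}$, $\partial_{\Bp}\svec{\eta}=-\svec{\Yp}$, $\partial_{\Bm}\svec{\eta}=-\svec{\Ym}$.  Substituting into each row of $F$ and using $\Ym=\overline{\Yp}$, the orthogonalities \eqref{Orthog0}, and the normalization $2\inner{\Yr}{\Yi}=1$, the Jacobian reduces to a block form
\[
DF \;=\; \begin{pmatrix} 0 & -\inner{\partial_\omega\phi}{\phi} & 0 & 0 \\ \inner{\phi}{\partial_\omega\phi} & 0 & 0 & 0 \\ \inner{\phi}{\Yr} & -\inner{\partial_\omega\phi}{\Yi} & 0 & -1 \\ \inner{\phi}{\Yr} & \inner{\partial_\omega\phi}{\Yi} & 1 & 0 \end{pmatrix},
\]
so that $\det DF = \inner{\partial_\omega\phi}{\phi}^{2}$, which is uniformly nonzero on $\calI$ by assumption (2).

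Given this, the implicit function theorem produces a unique $C^{1}$ (hence Lipschitz) map $u\mapsto(\theta,\omega,\Bp,\Bm)$ on a neighbourhood of $\phi_{\omega_1}e^{i\theta_1}$ satisfying \eqref{Orthog1}--\eqref{Orthog4}.  The size of that neighbourhood in the standard quantitative IFT is controlled by $\|(DF)^{-1}\|$ and by the modulus of continuity of $DF$ in the modulation variables.  Both quantities are uniformly bounded on $\calI$ because the eigenvalues $\EVpm$, the eigenfunctions $Y_\pm$, and the map $\omega\mapsto\phi_\omega\in H^1$ are $C^2$ and uniformly non-degenerate there.

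The one genuine issue is the dependence of the admissible radius on $\dist(\omega_1,\partial\calI)$.  The obstruction is not the linearization (which is uniform on $\calI$) but the constraint that the solution $\omega$ of the implicit equation must remain inside the open interval $\calI$ so that $\phi_\omega$, $Y_\pm$, and their derivatives remain defined with uniform bounds.  A perturbation of $u$ of size $\alpha$ in $L^r$ forces $|\omega-\omega_1|\lesssim\alpha$, so requiring $\omega\in\calI$ throughout the implicit-function iteration gives a neighbourhood of radius comparable to $\dist(\omega_1,\partial\calI)$, which is the claimed scaling.  I expect this uniformity bookkeeping, rather than the abstract application of the IFT, to be the main technical nuisance.
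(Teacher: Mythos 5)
Your proof is correct and follows essentially the same route as the paper: an implicit function theorem argument whose nondegeneracy rests on $\inner{\partial_\omega\phi}{\phi}\neq 0$ uniformly on $\calI$, with the neighbourhood radius limited only by the requirement that $\omega$ stay inside $\calI$. The one organizational difference is that the paper applies the IFT to a $2\times 2$ system for $(\theta,\omega)$ alone and then reads off $\Bp$, $\Bm$, $\eta$ from the explicit spectral projections \eqref{Eqn-EtaProjection}--\eqref{Eqn-DiscreteProj} (which are linear in $\varepsilon$ and smooth in $\omega$, hence automatically Lipschitz), whereas you invert the full $4\times 4$ Jacobian; your block-triangular computation and the value $\det DF=\inner{\partial_\omega\phi}{\phi}^{2}$ are correct.
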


For our application we take $r=p=m_2+1$. In particular,
$\abs{\omega_0-\omega(0)}\lesssim \alpha$ and the modulation
parameters are continuous in time. 

\begin{proof}
Define the map $\rho : (\theta,\omega, u)\in \R\times\R \times L^r \to \rho(\theta,\omega, u)\in\R^2$ by
\[\begin{aligned}
\rho (\theta,\omega, u)=
\left(\inner{\svec{\varepsilon}}{\J\twovec{0}{\phi_\omega}},
\inner{\svec{\varepsilon}}{\J\twovec{\partial_\omega\phi_\omega}{0}}
\right) &&\text{ where } &&\varepsilon = ue^{-i \theta} - \phi_\omega.
\end{aligned}\]
At $\left(\theta_1,\omega_1,\phi_{\omega_1} e^{i\theta_1}\right)$, $\rho = 0$, and the Jacobian with respect to $(\theta,\omega)$ is
\[\left[\begin{matrix}
0&-\left.\frac{1}{2}\partial_\omega\norm{\phi}_{L^2}^2\right|_{\omega=\omega_1}\\
\left.\frac{1}{2}\partial_\omega\norm{\phi}_{L^2}^2\right|_{\omega=\omega_1}&0
\end{matrix}\right].\]
The Jacobian is nonsingular and its inverse is uniformly bounded over $\omega_1\in\calI$
by assumption. We may apply the implicit function theorem on
Banach spaces (e.g. Berger~\cite{Berger}) to solve for $(\theta(u),\omega(u))$ such
that $\rho(\theta(u),\omega(u), u)=0$. We then define $\eta$,
$\Bp$, and $\Bm$  by \eqref{Eqn-EtaProjection} and
\eqref{Eqn-DiscreteProj}.
\end{proof}


\subsection{Critical Time $\TC$ \& Proof Strategy} \label{SubSec-Bootstrap}
For all data under consideration, there exists $\TD>0$ such that we may decompose the solution in the manner of \eqref{Eqn-Decomp} for $t\in[0,\TD)$,
\begin{equation}\label{Eqn-Decomp1}
u(t) = \left(\phi_{\omega(t)} + \Bp(t)\Yp + \Bm(t)\Ym + \eta(t)\right)e^{i\theta(t)}.
\end{equation}
Define a new time scale $\TC\leq\TD$ in terms of persistent good control of $\Bp(t)$,
\begin{equation}\label{Eqn-Defn-TC}
\TC = \sup_{T<\TD}\left\{\begin{aligned}
    \forall t\in[0,T], && \abs{\Bp(t)} < \alpha\bka{t}^{-\muBp}
\end{aligned}\right\}.
\end{equation}
Our proof proceeds on two paths, depending on $\TC$:
\begin{enumerate}
\item $\TC = \TD$

This case is considered in Section \ref{Section-Convergence}. We prove that $\TC=+\infty$ and that the solution converges to the soliton family.

\item $\TC < \TD$

This case is considered in Section \ref{Section-Escape}. We prove that the growth of $\Bp(t)$ cannot be controlled, and that exit from the neighbourhood of the soliton family occurs at $\TEx$, with $\TC<\TEx<\TD$.
\end{enumerate}

\subsection{Dynamic Equations}\label{Subsection-DynamicEqns}

Substitute \eqref{Eqn-Decomp1} into \eqref{Eqn-NLS} and use the
expansion \eqref{Eqn-Expansion}:
\[
i(\dot{\theta}-\omega)\left(\phi+\epsilon \right) +
\dot{\omega}\partial_\omega\phi+ \partial_t \epsilon = -i \left( \scrptL \epsilon +
\calN(\epsilon) \right),
\]
where $\epsilon = \Bp\Yp + \Bm\Ym+\eta$, and $\calN$ was defined in \eqref{Eqn-Expansion}. In vector form,
\begin{equation}\label{Eqn-Full}\begin{aligned}
\partial_t&\left( \svec{\eta}+\Bp\svec{\Yp}+\Bm\svec{\Ym} \right)
     + \dot{\omega}\twovec{\partial_\omega\phi}{0}
     + (\dot{\theta}-\omega) \twovec{0}{\phi} \\
     & =\J\boldL\svec{\epsilon} + (\dot{\theta}-\omega)\J\svec{\epsilon} + \J N(\epsilon),
\end{aligned}\end{equation}
where we use the notation $N(\epsilon)= \svec{\calN(\epsilon)}$ and write $\partial_t\svec{\epsilon}$ in full to emphasize its terms will be handled differently.
Take the product of \eqref{Eqn-Full} by $\J\twovec{0}{\phi}$ and use \eqref{Orthog1} to integrate
$\inner{\partial_t \svec{\eta} }{\J\twovec{0}{\phi}}$
by parts in time. We get
\begin{equation}\label{Eqn-OmegaDynamic}\begin{aligned}
\dot{\omega}\inner{\partial_\omega\phi}{\phi} =
    &\dot{\omega}\inner{\svec{\epsilon} }{ \twovec{\partial_\omega\phi}{0} }
    +(\dot{\theta}-\omega)\inner{\svec{\epsilon} }{\twovec{0}{\phi}}
    +\inner{N(\epsilon)}{\twovec{0}{\phi}} .
\end{aligned}\end{equation}

Similarly, with $-\J\twovec{\partial_\omega\phi}{0}$ and using \eqref{Orthog2}, we get
\begin{equation}\label{Eqn-ThetaDynamic}
(\dot{\theta}-\omega)\inner{\partial_\omega\phi}{\phi} =
    \dot{\omega}\inner{\svec{\epsilon} }{ \twovec{0}{\partial^2_\omega\phi} }
    -(\dot{\theta}-\omega)\inner{\svec{\epsilon}}{\twovec{\partial_\omega\phi}{0}}
    -\inner{N(\epsilon)}{\twovec{\partial_\omega\phi}{0}} .
\end{equation}

Now with $\J\svec{\Yp}$ and using \eqref{Orthog3} to remove the terms in $\partial_t\eta$ and $\boldL\eta$, we obtain (recall that $2\inner{\Yr}{\Yi} = 1$):
\begin{equation}\label{Eqn-BmDynamic}\begin{aligned}
\dBm
=
&\EVm\Bm \\
&+\frac{1}{2\inner{\Yr}{\Yi}}\left(
    \begin{aligned}
    &\dot{\omega}\left( \inner{\svec{\eta}}{\partial_\omega\J\svec{\Yp}}
                            - \inner{\Bp\partial_\omega\svec{\Yp} + \Bm\partial_\omega\svec{\Ym}}{\J\svec{\Yp}} \right)\\
    &+\inner{\left(\dot{\theta}-\omega\right)\svec{\epsilon} + N(\epsilon)}{\svec{\Yp}}
    \end{aligned}\right).
\end{aligned}\end{equation}

Finally, with $\J\svec{\Ym}$ and using \eqref{Orthog4} to remove the terms in $\partial_t\eta$ and $\boldL\eta$, we get:
\begin{equation}\label{Eqn-BpDynamic}\begin{aligned}
\dBp
=
&\EVp\Bp \\
&-\frac{1}{2\inner{\Yr}{\Yi}}\left(
    \begin{aligned}
    &\dot{\omega}\left( \inner{\svec{\eta}}{\partial_\omega\J\svec{\Ym}}
                            - \inner{\Bp\partial_\omega\svec{\Yp} + \Bm\partial_\omega\svec{\Ym}}{\J\svec{\Ym}} \right)\\
    &+\inner{\left(\dot{\theta}-\omega\right)\svec{\epsilon} + N(\epsilon)}{\svec{\Ym}}
    \end{aligned}\right).
\end{aligned}\end{equation}

\subsection{Tracking-Error Equation} \label{Subsection-TrackingError}
It will prove more straightforward to estimate the tracking-error $\eta$ in terms of a fixed operator.  Let $\calP_{c,T}$ denote the projection onto the continuous spectrum of the operator with some fixed $\omega(T)$.\footnotemark
\footnotetext{The convergence of $\omega(t)$ as $t\to\TC$ will be established by Lemma \ref{Lemma-thetaomega}.}
Denote $\widetilde{\eta} = \calP_{c,T}\eta$.
We do {\bf not} change our choice of decomposition.

We first isolate linear terms in $\eta$ in \eqref{Eqn-Full}, so that
\[
  \begin{split}
\pd_t \svec{\eta} &= \J \boldL \svec{\eta} + (\dot \theta - \omega) \J
\svec{\eta} + B_0 + \J N  ,\quad \text{where}
\\
B_0 &=
- \dot{\omega}\twovec{\partial_\omega\phi}{0} -
(\dot{\theta}-\omega) \twovec{0}{\phi}
+ \left\{ \J \boldL -\pd_t  + (\dot \theta - \omega)\J \right\}
(\Bp \svec{Y_+} + \Bm\svec{Y_-}).
  \end{split}
\]
Note that $\boldV \equiv \boldL - (-\Delta +V_0 + \omega)\boldI$ is a localized potential.
We further rewrite
\[
  \begin{split}
\pd_t \svec{\eta}
=  \J \boldL_T \svec{\eta} + (\dot \theta - \omega_T ) \J
\svec{\eta} + \J (\boldV - \boldV_T)\svec{\eta}
+ B_0 + \J N.
  \end{split}
\]
Applying $\calP_{c,T}$ to all terms, we get
\begin{equation}\label{Eqn-EtaTilde}\begin{aligned}
\partial_t\svec{\widetilde{\eta}}
    = & \J\boldL_T\svec{\widetilde{\eta}} +\left(\dot{\theta}-\omega_T\right)\calP_{c,T}\J\svec{\widetilde{\eta}}
\\
&+\calP_{c,T}\left(
  -\left(\dot{\theta}-\omega_T\right)\J\left(\calP_{c,T}-\calP_c\right)\svec{\eta}
  + \J (\boldV - \boldV_T)\svec{\eta}
  +  B
  + \J N
  \right),
\end{aligned}
\end{equation}
where $\calP_{c,T}B = \calP_{c,T}B_0$ and
\[
\begin{aligned}
B
=
&
-(\calP_{c,T}-\calP_c)\left(
    \dot{\omega}\twovec{\partial_\omega\phi}{0}
+ \left(\dot{\theta}-\omega\right) \twovec{0}{\phi}
\right)
\\
&
-(\calP_{c,T}-\calP_c)\left(
        (\dBp-\EVp\Bp)\svec{\Yp} + (\dBm-\EVm\Bm)\svec{\Ym}
    \right)
\\
&-\dot{\omega}\left(\Bp\partial_\omega\svec{\Yp}
+ \Bm\partial_\omega\svec{\Ym} \right)
+ \left(\dot{\theta}-\omega\right) \J
\left(\Bp\svec{\Yp}+\Bm\svec{\Ym}\right).
\end{aligned}
\]
Observe from \eqref{Eqn-DiscreteProj} that both $\boldV-\boldV_T$ and $\calP_{c,T}-\calP_c$ are localized potentials, of the order $\abs{\omega-\omega_T}$, depending on $\calI$ and provided $\abs{\omega-\omega_T}$ is sufficiently small. Also note that $B$ has localized spatial support, and
\begin{equation}\label{Eqn-BEstimate}
B = \bigO\left( (\abs{\omega-\omega_T}+\abs{\Bp}+\abs{\Bm}) (\abs{\dot{\omega}} + \abs{\dot{\theta}-\omega} + \abs{\dBp-\EVp\Bp} + \abs{\dBm-\EVm\Bm} ) \right).
\end{equation}

\subsection{Buslaev-Perelman Estimate} \label{Subsec-BPEstimate}
Later we will assume the tracking-error $\widetilde{\eta}$ is small, and then the evolution given by \eqref{Eqn-EtaTilde} should be essentially linear.  To handle the leading order correction in $(\dot{\theta}-\omega)$, we use a technique introduced by Buslaev and Perelman~\cite{Buslaev1995}, later discussed by Buslaev and Sulem~\cite{Buslaev2003}. For $N\geq 3$, the proof is given by Cuccagna~\cite{Cuccagna2003}. For $N=2$, the proof is claimed by Cuccagna and Tarulli~\cite{CuccagnaTarulli2009}. 
Let $\calP_+$ and $\calP_-$ denote the spectral projection operators onto the positive and negative continuous spectrum. That is, $\calP_{c,T} = \calP_+ + \calP_-$.
\begin{proposition}\label{Prop-BuslaevPerelman}
\begin{equation}\label{Eqn-BuslaevPerelman}
-\calP_{c,T}\J = \calP_{c,T}\J^{-1} = i\calP_+ - i\calP_- + \K,
\end{equation}
where $\K$ is a localizing operator, bounded $L^q \mapsto L^{r'}$, for any pair $1< r'\leq 2\leq q<\infty$.
\end{proposition}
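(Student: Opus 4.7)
The plan is to reduce the identity to a scalar (``free'') matrix computation, and then transport the resulting identity to the full operator $\J\boldL$ via wave-operator intertwining, with the smoothing properties of the wave operators furnishing the $L^q \to L^{r'}$ bound on $\K$. The first equality $-\calP_{c,T}\J = \calP_{c,T}\J^{-1}$ is immediate from the matrix computation $\J^{-1} = -\J$.

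First I would verify the identity in the free setting $\J\boldL_0$, where $\boldL_0 := (-\laplacian + V_0 + \omega)\boldI$ is a scalar, positive operator on its continuous subspace. Since $\boldL_0$ commutes with $\J$ pointwise, $\J\boldL_0 = \boldL_0\J$, and $\J$ admits the pointwise diagonalization
\[
\J = i\Pi_+ - i\Pi_-, \qquad \Pi_\pm = \tfrac{1}{2}\begin{bmatrix} 1 & \mp i \\ \pm i & 1 \end{bmatrix},
\]
with $\Pi_\pm$ projecting onto the $\pm i$-eigenvectors $\twovec{1}{\pm i}$ of $\J$. The spectrum of $\J\boldL_0$ splits along this pointwise decomposition, and its continuous spectral projectors $\calP_\pm^0$ coincide (up to a swap dictated by the paper's sign convention for ``positive'' vs.~``negative'' continuous spectrum) with $\Pi_\pm$. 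This yields the identity $-\calP_c^0\J = i\calP_+^0 - i\calP_-^0$ in the free case, i.e.~the proposition with $\K = 0$.

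Next I would invoke the wave operators $W$ constructed by Cuccagna~\cite{Cuccagna2003} for $N \ge 3$ and by Cuccagna--Tarulli~\cite{CuccagnaTarulli2009} for $N = 2$, which intertwine $\J\boldL$ with $\J\boldL_0$ on the continuous subspaces: $W\calP_c^0 = \calP_{c,T}W$ and $W\calP_\pm^0 = \calP_\pm W$. Defining $\K := -\calP_{c,T}\J - (i\calP_+ - i\calP_-)$ and subtracting the free identity, one can write
\[
\K = -(\calP_{c,T} - \calP_c^0)\J - i(\calP_+ - \calP_+^0) + i(\calP_- - \calP_-^0).
\]
Each projector difference can be expressed through $W - I$ (and its adjoint) composed with the free projectors and the localized potential $\boldV - \boldV_0$, plus finite-rank contributions onto the exponentially decaying discrete subspaces spanned by $\phi$, $\partial_\omega\phi$, $\Yp$, $\Ym$. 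The finite-rank pieces are trivially localizing, and the remaining continuous pieces inherit the $L^q \to L^{r'}$ bound directly from the smoothing estimate on $W - I$.

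The main obstacle is the smoothing bound $W - I : L^q \to L^{r'}$ over the full range $1 < r' \le 2 \le q < \infty$. The high-frequency portion is essentially classical Yajima-type theory for distorted Fourier transforms, but the low-frequency analysis near the spectral thresholds is delicate, especially in dimension $N = 2$ where the free resolvent carries a logarithmic singularity. This is precisely the technical content of the cited Cuccagna and Cuccagna--Tarulli references, and I would not attempt to reprove it here.
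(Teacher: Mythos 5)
The paper does not actually prove this proposition: it is imported wholesale from the literature (Cuccagna~\cite{Cuccagna2003} for $N\ge 3$, Cuccagna--Tarulli~\cite{CuccagnaTarulli2009} for $N=2$), and the only in-text justification is Remark~\ref{Remark-BuslaevPerelman}, which observes that in the diagonal (free) case the identity holds with $\K=0$. Your proposal matches that treatment: your pointwise diagonalization of $\J$ into $i\Pi_+-i\Pi_-$ is exactly the content of the remark (and you correctly flag the sign/labelling convention for $\calP_\pm$, which in the paper's $(f,\bar f)$ picture corresponds to the positive/negative spectrum of the self-adjoint reference operator, hence the swap), and like the paper you defer the analytic core to the same references. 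One caveat on your sketched mechanism: the $L^q\to L^{r'}$ bound on $\K$ for $1<r'\le 2\le q<\infty$ is a gain of integrability, and it does not follow from Yajima-type wave-operator theory, which yields only $L^r\to L^r$ boundedness of $W$; a ``smoothing estimate on $W-I$'' over that full range is not a standard result you can simply cite. In the Buslaev--Perelman/Cuccagna arguments the localizing property comes instead from writing $\calP_\pm$ as contour integrals of the resolvent and expanding $R=R_0-R_0\boldV R$, so that every correction term carries the exponentially localized matrix potential $\boldV$ sandwiched between resolvents; this is the step you would need to make precise if you carried the argument out rather than citing it. Since the paper itself supplies no more than the citation, this does not constitute a gap relative to the paper, but the route through $W-I$ as you state it would not go through literally.
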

\begin{remark}\label{Remark-BuslaevPerelman}
To motivate \eqref{Eqn-BuslaevPerelman}, note that an equation of the form $\partial_t\svec{f} = \J\boldL_T\svec{f}$ may also be written in the form
\[
i\partial_t\twovec{f}{\overline{f}} = \left[\begin{matrix}-\laplacian+\omega-V_1&V_2\\V_3&\laplacian-\omega+V_4\end{matrix}\right]\twovec{f}{\overline{f}},
\]
where the potentials $V_j$ are due to both $V_0$ and the nonlinearity.  We may view this matrix operator as a perturbation of $\dis \left[\begin{matrix}-\laplacian+\omega+V_0&0\\0&\laplacian-\omega-V_0\end{matrix}\right]$,
for which \eqref{Eqn-BuslaevPerelman} is an identity when $\K=0$.
\end{remark}
Let us decompose $\svec{\widetilde{\eta}} = \calP_{c,T} \svec{\eta}$ according to $\calP_\pm$, and incorporate the accumulated error in tracking the phase,
\[
\widetilde{\eta}_{\pm} = \left(\calP_\pm\svec{\widetilde{\eta}}\right)e^{\mp i\left(\theta(t)-t\omega_T\right)} \equiv \left(\calP_\pm\svec{\widetilde{\eta}}\right)e^{\pm i\Theta(t)}.
\]
The evolution of $\widetilde{\eta}_{\pm}$ follows from \eqref{Eqn-EtaTilde},
\begin{equation}\label{Eqn-EtaPlusMinus}
\partial_t\widetilde{\eta}_\pm = \J\boldL_T\widetilde{\eta}_\pm + \calP_\pm e^{\pm i\Theta(t)}
\left(
  \left(\dot{\theta}-\omega_T\right)\K\svec{\widetilde{\eta}}
  + \J(\boldV-\boldV_T)\svec{\widetilde{\eta}}
  + B
  + \J N\left(\epsilon\right)
\right),
\end{equation}
where we have abused notation to absorb $(\dot{\theta}-\omega_T)\left(\calP_{c,T}-\calP_c\right)$ into $\boldV-\boldV_T$.
Equation \eqref{Eqn-EtaPlusMinus} will be our means to establish $L^p$ and $\Lloc$ estimates for $\widetilde{\eta}$ through the equivalent Duhamel formulation:
\begin{multline} \label{Eqn-EtaPMint}
\widetilde{\eta}_\pm = e^{t\J\boldL_T}\widetilde{\eta}_\pm(0)\\
+\int_0^t e^{(t-s)\J\boldL_T} \calP_\pm e^{\pm i\Theta(s)}\left(
  \left(\dot{\theta}-\omega\right)\K\svec{\widetilde{\eta}}
  + \J(\boldV-\boldV_T)\svec{\widetilde{\eta}}
  + B
  + \J N\left(\epsilon\right)
\right)(s)\,ds.
\end{multline}

\subsection{Arithmetic}
\label{Section-Numerology}
For later reference, we collect here an assortment of arithmetic facts.
Due to  \eqref{Eqn-NonlinCondition1} and \eqref{Eqn-NonlinCondition2}, there exists $\deltaone>0$ sufficiently small such that we may define:
\begin{equation}\begin{aligned}\label{Eqn-SigmaQ}
\muQ &= \left\{\begin{aligned}
                 &\sigma_p &&\text{ for }\sigma_p > \frac{2}{\mZero+1},\\
                 &\frac{2}{\mZero+1}+\deltaone &&\text{ for }\sigma_p \leq \frac{2}{\mZero+1},
        \end{aligned}\right.\\
& \text{ with } \muQ < \min\{1,  \frac{N}{2}(m_1-1)-1 \}.
\end{aligned}\end{equation}
Note that $\muQ\in\left[\sigma_p,1\right)$, corresponding to some $q\in[p,p_{\max})$. Such a choice is only possible for $N\geq 2$, since
\begin{equation}\label{Arith8}
(\mZero+1)\muQ > 2,
\end{equation}
and, in particular, $\sigma_q > \frac{1}{2}$.  Constant $\deltaone>0$ is included to ensure strict inequality for \eqref{Arith8}, and is otherwise neglected by taking it sufficiently small.
Let us emphasize that
\begin{equation}\label{Arith2}
m_1 > 1 + \frac{2}{N}\left(1+\muQ\right).
\end{equation}
Consider $m_j\theta_j\sigma_p \equiv \frac{N}{2}\left(m_j-1\right)-\sigma_p$, an expression that will arise during norm interpolation. Then
\begin{equation}\label{Arith3}\begin{aligned}
m_j\theta_j\sigma_p >1
&&
\mbox{and}
&&
m_j\theta_j > 1,
\end{aligned}\end{equation}
which are due to \eqref{Eqn-NonlinCondition1} and $\sigma_p<1$, respectively. Due to $q<p_{\max}$,
\begin{equation}\label{Arith5}
2\left(1-\frac{1}{q}\right) < 1+\frac{2}{N} < m_j.
\end{equation}
Consider $m_j\widetilde{\theta}_j\sigma_p \equiv \frac{N}{2}\left( m_j - 1\right) - \sigma_q$, another expression that will arise during norm interpolation.
Due to \eqref{Arith2}, and since $\sigma_p<1<m_0$,
\begin{equation}\label{Arith7new}\begin{aligned}
m_j\widetilde{\theta}_j\sigma_p > 1 &&\mbox{and}&& m_j\widetilde{\theta}_j > \frac{1}{m_0}. 
\end{aligned}\end{equation}
Finally, one can verify that $\frac{1}{m_0}\left(1-\frac{m_0}{2}\right)>1-m_j\left(\frac{m_0}{2}\right)$ is a consequence of $m_j\geq m_1>1$, and since $m_1^3-3m_1+2>0$ when $m_0=m_1<2$.
The second expression of \eqref{Arith7new} implies:
\begin{equation}\label{Arith6}
(1-\widetilde{\theta}_j)m_j\left(\frac{\mZero}{2}\right) + m_j\widetilde{\theta}_j > 1.
\end{equation}

\section{Convergence Case}
\label{Section-Convergence}
In this section, we will consider the case of $\TC=\TD$.  We will prove that $\TC=+\infty$ and convergence to a soliton by means of a bootstrap argument. The estimates shown below will be reused in Section \ref{Section-Escape}.

\subsection{Hypotheses and First Estimates}

For $t\in [0,\TD)$, we decompose $u(t)$ as in \eqref{Eqn-Decomp1}. Assume that $\THy\in(0,\TC]$ is the last time for which the following hypotheses hold for all $t\in [0,\THy)$:
\begin{equation} \label{Eqn-Boothypo}
\begin{aligned}
&|\Bm(t)| &&&\leq&& &2C_1\alpha \bka{t}^{-\muBm},\\
&\norm{\eta(t)}_{L^p} &&&\leq&& &2C_2\alpha \bka{t}^{-\muP},\\
&\norm{\eta(t)}_{\Lloc} &&&\leq&& &2C_3\alpha \bka{t}^{-\muLoc}.
\end{aligned}
\end{equation}
Universal constants $C_j$ will be determined later. Recall that from \eqref{Eqn-Defn-TC}, the definition of $\TC$, we also have
\begin{equation} \label{Eqn-BpControl}
|\Bp(t)|\leq \alpha\bka{t}^{-\muBp}.
\end{equation}

Under these hypotheses, the goal is now to obtain the same inequalities as \eqref{Eqn-Boothypo} without the factor $2$, by choosing $\overline{\alpha_0}$ small enough. We now fix $T=\THy$.
First, we prove estimates on the parameters $\theta$ and $\omega$.

\begin{lemma} \label{Lemma-thetaomega}
For all $t\in [0,\THy)$, we have $\omega(t)\in\calI$,
\begin{equation}\label{Eqn-thetaomega} \begin{aligned}
|\dot{\omega}(t)| + |\dot{\theta}(t)-\omega(t)| &\leq C\alpha^{\mZero}\bka{t}^{-\muNon}, &&\text{ which implies }\\
|\omega(t)-\omega(T)| &\leq C\alpha^{\mZero}\bka{t}^{-\muNon+1}, &&\text{ and }\\
|e_{\pm,\omega(t)}-e_{\pm,\omega(T)}| &\leq C\alpha^{\mZero}\bka{t}^{-\muNon+1}.
\end{aligned}\end{equation}
By exactly the same proof, we have
\[
\abs{\dot{b}_\pm - e_\pm b_\pm} \leq C\alpha^{\mZero}\bka{t}^{-\muNon}.
\]
\end{lemma}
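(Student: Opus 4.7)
The plan is to read \eqref{Eqn-OmegaDynamic}--\eqref{Eqn-ThetaDynamic} as a nearly diagonal linear system in the unknown pair $(\dot\omega,\dot\theta-\omega)$. The diagonal coefficient is $\inner{\partial_\omega\phi}{\phi}$, which by assumption (2) of Section~\ref{SubSec-Assumptions} is uniformly bounded away from zero on $\calI$; the remaining entries are inner products of $\svec{\epsilon}$ against localized functions such as $\twovec{\partial_\omega\phi}{0}$, $\twovec{0}{\phi}$, $\twovec{0}{\partial^2_\omega\phi}$, and these are $O(\alpha)$ by the bootstrap hypotheses \eqref{Eqn-Boothypo}--\eqref{Eqn-BpControl} combined with the exponential decay of $\phi$ and its derivatives. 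For $\overline{\alpha_0}$ small enough, the $2\times 2$ matrix is therefore uniformly invertible, and $(\dot\omega,\dot\theta-\omega)$ is expressed as the inverse acting on the remaining terms, namely $\inner{N(\epsilon)}{\twovec{0}{\phi}}$ and $\inner{N(\epsilon)}{\twovec{\partial_\omega\phi}{0}}$ (with additive contributions $O(\alpha)\cdot(\dot\omega,\dot\theta-\omega)$ already absorbed into the left-hand side).

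The central estimate is the nonlinear pairing. Using \eqref{Eqn-NonlinConseq} and the exponential localization of $\phi$,
\[
\bigl|\inner{N(\epsilon)}{\twovec{0}{\phi}}\bigr| \;\lesssim\; \|\epsilon\|_{L^2_{loc}}^{2} + \|\epsilon\|_{L^{m_1}_{loc}}^{m_1} + \|\epsilon\|_{L^{m_2}_{loc}}^{m_2},
\]
with the $\|\epsilon\|_{L^2_{loc}}^2$ term retained only when the corresponding $A_j\neq 0$. On any fixed ball $L^q\hookrightarrow L^r$ for $r\le q$, so the bootstrap hypothesis on $\|\eta\|_{L^q_{loc}}$, the exponential decay of $Y_\pm$, and $|b_\pm|\lesssim \alpha\bka{t}^{-1}$ together yield $\|\epsilon\|_{L^r_{loc}}\lesssim \alpha\bka{t}^{-\muLoc}$ for every relevant $r$. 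Tracking the slowest decaying power, the smallest exponent $\mZero=\min\{2,m_1\}$ dominates and produces $C\alpha^{\mZero}\bka{t}^{-\mZero\muLoc} = C\alpha^{\mZero}\bka{t}^{-\muNon}$; inverting the linear system gives the first line of \eqref{Eqn-thetaomega}.

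Integration in time then yields $|\omega(t)-\omega(T)|\lesssim \alpha^{\mZero}\bka{t}^{-\muNon+1}$, using \eqref{Arith8} to guarantee $\muNon>1$ so that the tail integral converges. The hypothesis $\dist(\omega_0,\partial\calI)>C\alpha_0$ in Theorem~\ref{Thm-MainResult} is precisely what keeps $\omega(t)\in\calI$ for all $t\in[0,\THy)$, so the decomposition remains legitimate throughout. The eigenvalue bound then reduces to the Lipschitz dependence of the simple isolated real eigenvalues $e_{\pm,\omega}$ on $\omega$, which follows from the $C^2$ map $\omega\mapsto\phi_\omega$ together with standard analytic perturbation theory for the isolated discrete spectrum of $\J\boldL$. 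Finally, the estimate for $\dBp-\EVp\Bp$ (and its conjugate) is obtained by applying exactly the same recipe to \eqref{Eqn-BmDynamic}--\eqref{Eqn-BpDynamic}: after separating the explicit $\EVpm \Bpx{}\,$-terms, the right-hand sides are linear combinations of $\dot\omega$, $\dot\theta-\omega$, and pairings of $N(\epsilon)$ with the exponentially localized $\svec{Y_\pm}$, each already controlled by $C\alpha^{\mZero}\bka{t}^{-\muNon}$.

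The main obstacle is the power-counting in the nonlinear estimate: one must verify that $\mZero\muLoc>1$ so that the time integration produces a genuine $\bka{t}^{-\muNon+1}$ decay rather than a logarithm, and that the choice of $L^q_{loc}$ (rather than a weaker norm) is actually available to absorb the $|\epsilon|^{m_j}$ terms. Both requirements are delicate: they are precisely what forces the arithmetic constraints \eqref{Eqn-NonlinCondition1}--\eqref{Eqn-NonlinCondition2} and the choice of $\muQ$ in Section~\ref{Section-Numerology}, and without those inequalities the bootstrap would not close.
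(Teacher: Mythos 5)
Your argument is correct and is essentially the paper's own proof: the paper likewise treats \eqref{Eqn-OmegaDynamic}--\eqref{Eqn-ThetaDynamic} as a perturbation of the invertible diagonal system with coefficient $\inner{\partial_\omega\phi}{\phi}$ (it simply adds the two equations and absorbs the $(|\dot\omega|+|\dot\theta-\omega|)\,\bigO(\epsilon)$ terms for $\overline{\alpha_0}$ small), then bounds $\inner{N(\epsilon)}{\mathrm{potential}}$ via \eqref{Eqn-NonlinConseq} together with the bootstrap hypotheses \eqref{Eqn-Boothypo}, \eqref{Eqn-BpControl}, using the $L^q$ control of $\eta$ exactly as you do. Your power count $\alpha^{\mZero}\bka{t}^{-\mZero\sigma_q}=\alpha^{\mZero}\bka{t}^{-\muNon}$, the use of \eqref{Arith8} to get $\muNon>1$ for the time integration, and the identical treatment of \eqref{Eqn-BmDynamic}--\eqref{Eqn-BpDynamic} for $\dot b_\pm-e_\pm b_\pm$ all match the intended argument.
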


\begin{proof}
We will only prove the first inequality. To do this, we add \eqref{Eqn-OmegaDynamic} and \eqref{Eqn-ThetaDynamic}, to obtain $|\dot{\omega}|+|\dot{\theta}-\omega|\leq (|\dot{\omega}|+|\dot{\theta}-\omega|)\O(\epsilon) +\abs{\inner{N(\epsilon)}{\mbox{potential}}}$. With $\overline{\alpha_0}$ small enough, we get $|\dot{\omega}|+|\dot{\theta}-\omega|\leq \inner{N(\epsilon)}{\mbox{potential}}$, and \eqref{Eqn-thetaomega} follows from \eqref{Eqn-NonlinConseq} and the bootstrap inequalities \eqref{Eqn-Boothypo} and \eqref{Eqn-BpControl}.
We use the $\Lloc$ control of $\eta$ since it may be stronger by \eqref{Eqn-SigmaQ}.
\end{proof}

\subsection{Improved Estimates}
\label{Subsection-ImprovedEstimates}

\emph{Estimate on $b_-$.} We can now improve the estimate on $b_-$. Fix $e_1=e_{-,\omega(T)}<0$. From Lemma \ref{Lemma-thetaomega} we obtain:
\[
\abs{\dot{b_-}-e_1b_-} \lesssim \alpha^{\mZero}\bka{t}^{-\muNon}\left( \bka{t}\abs{b_-(t)} + 1 \right) \leq C_0 \alpha^{\mZero}\bka{t}^{-\muNon},
\]
where $C_0>0$ is an explicit constant depending on $\calI$. Moreover, since $\norm{\epsilon(0)}_{H^1\cap L^1} =\alpha$, we may assume $|b_-(0)|\leq C_0\alpha$.
Integrating between $0$ and $t$,
\begin{align*}
|b_-(t)| &\leq e^{+e_1t}C_0\alpha +\int_0^t e^ {+e_1(t-s)}C_0\alpha^{\mZero}\bka{s}^{-\muNon}\,ds\\
         &\lesssim \alpha e^{+e_1t} +\alpha^{\mZero}\left(
           \int_{0}^{t - \frac{\muNon}{\abs{e_1}}\ln t}{e^{+e_-(t-s)}\,ds} + \int_{t - \frac{\muNon}{\abs{e_1}}\ln t}^t{\bka{s}^{-\muNon}\,ds}
           \right),
\end{align*}
with the obvious corrections to integral bounds when $t\approx 0$.
For $\overline{\alpha_0}$ sufficient small, we have shown $\abs{b_-(t)} \lesssim \alpha\left(1+\ln{\bka{t}}\right)\bka{t}^{-\muNon} \leq C_1\alpha\bka{t}^{-\muBm}$
where $C_1>C_0$ is the appropriate universal constant.

\emph{Decomposition of $\eta$.}
Recall from \eqref{Eqn-EtaProjection} that $\eta = \calP_c\eta$, implying
\[
\eta = \widetilde{\eta} + \left(\calP_c - \calP_{c,T}\right)\eta,
\]
so that norms of $\eta$, $\widetilde{\eta}$, and $\widetilde{\eta}_\pm$ are all comparable. Due to \eqref{Orthog1}, we have \[\norm{u(t)}_{L^2}^2 \approx \inner{\phi_\omega+\eta}{\phi_\omega+\eta} = \norm{\phi_\omega}_{L^2}^2+\norm{\eta}_{L^2}^2, \] where we have ignored terms of order $b_\pm(t)$.  From Lemma \ref{Lemma-thetaomega}, we conclude
\begin{equation}\label{Eqn-EtaL2}
\norm{\eta(t)}_{L^2} \leq C_4 \alpha^{\frac{\mZero}{2}},
\end{equation}
where the constant $C_4$ depends on $\calI$.
Now we estimate terms of
\eqref{Eqn-EtaPMint}, in some norm $L^{r'}$, with $r'\in (1,2)$.
From Proposition \ref{Prop-BuslaevPerelman}, \eqref{Arith8}, and Lemma \ref{Lemma-thetaomega}:
\begin{equation} \label{Eqn-Klr}
\norm{\left(\dot{\theta}-\omega_T\right)\K\svec{\widetilde{\eta}}}_{L^{r'}} \leq \left(\dot{\theta}-\omega_T\right) \norm{\widetilde{\eta}}_{L^q} \leq C\alpha^{2+}\bka{t}^{-(m_0+1)\muQ+1} \lesssim \alpha^{2+}\bka{t}^{-(1+)}.
\end{equation} 
Note that we use here standard notation $1+$ to designate a number slightly bigger than $1$.
The same estimate holds for both $\norm{\J(\boldV-\boldV_T)\svec{\widetilde{\eta}}}_{L^{r'}}$ and $\norm{B}_{L^{r'}}$. Indeed, from \eqref{Eqn-BEstimate}, we have
\begin{equation}\label{Eqn-Blr}
\norm{B}_{L^{r'}} \leq C \alpha^{2+}\left( \bka{t}^{-2\muNon+1} + \bka{t}^{-\muNon-\muBp}\right),
\end{equation}
which is lower order provided $\sigma_q<1$.

\emph{$L^p$ estimate of $\eta$.}
To start, we estimate $\norm{N(\epsilon)}_{L^{p'}}$ using equation \eqref{Eqn-NonlinConseq}:
\[
\norm{\epsilon^{m_j}}_{L^{p'}} \lesssim \left(\abs{b_+}+\abs{b_-}\right)^{m_j} + \norm{\eta}_{L^{m_jp'}}^{m_j},
\]
which may be interpolated as
\begin{equation}\label{Eqn-LpInterp}
\norm{\eta}_{L^{m_jp'}}^{m_j} \leq \norm{\eta}_{L^2}^{(1-\theta_j)m_j} \norm{\eta}_{L^p}^{m_j\theta_j},
\end{equation}
with $\dis \frac{1}{m_jp'} = \frac{\theta_j}{p}+\frac{1-\theta_j}{2}$, or, equivalently, $\dis \theta_j = \frac{\frac{1}{2}-\frac{1}{m_jp'}}{\frac{1}{2}-\frac{1}{p}}$.
We note that $\theta_j \in (0,1]$ provided $\dis 2\left(1-\frac{1}{p}\right)<m_j\leq p-1$, which is implied by $1+\frac{2}{N}<m_j\leq m_2$.
With \eqref{Eqn-Boothypo}, \eqref{Eqn-BpControl} and \eqref{Arith3}, we have
\begin{equation}\label{Eqn-Epslr}
\norm{\epsilon^{m_j}}_{L^{p'}} \lesssim \alpha^{1+}\bka{t}^{-(1+)}.
\end{equation}
If $m_j > 2$,
\[
\norm{A_j\phi^{m_j-2}\epsilon^2}_{L^{p'}} \lesssim \left(\abs{b_+}+\abs{b_-}\right)^2 + \norm{\eta}_{\Lloc}^2 \leq \alpha^2\bka{t}^{-(1+)},
\]
which we view as a correction to \eqref{Eqn-Epslr}.

Apply \eqref{Eqn-AssumedDecay} to all terms of \eqref{Eqn-EtaPMint}, with $r=p$. Since $L^p$ is energy subcritical, we may use that $\norm{\etat_\pm(0)}_{H^1}$ is small to improve the bound on the linear term,
\[
\norm{e^{t\J\boldL_T}\etat_\pm(0)}_{L^p} \leq \frac{C_2}{4} \alpha\bka{t}^{-\sigma_p}.
\]
The universal constant $C_2$ is determined by this relation. For the other terms, apply \eqref{Eqn-Klr}, \eqref{Eqn-Blr} and \eqref{Eqn-Epslr} to get
\begin{equation}\label{Eqn-LpIntegral}\begin{aligned}
\norm{\etat_{\pm}(t)}_{L^p}
&\leq \frac{C_2}{4}\alpha \bka{t}^{-\sigma_p} + C\alpha^{1+} \int_0^t{ |t-s|^{-\sigma_p}\bka{s}^{-(1+)}\, ds}.
\end{aligned}\end{equation}
This proves $\norm{\eta(t)}_{L^p}\leq C_2\alpha \bka{t}^{-\sigma_p}$, as desired, by assuming $\overline{\alpha_0}>0$ is sufficiently small.

\emph{$\Lloc$ estimate of $\eta$.} We consider now the $\Lloc$ norm of $\eta$. 
From \eqref{Eqn-EtaPMint}, we have
\begin{equation}\label{Eqn-L2Split}\begin{aligned}
\norm{\widetilde{\eta}_{\pm}(t)}_{\Lloc}
&\lesssim \norm{e^{t\J\boldL_T}\widetilde{\eta}_{\pm}(0)}_{L^q} \\
&\quad + \int_0^{t} \norm{e^{(t-s)\J\boldL_T}\calP_{\pm}\left( \left(\dot{\theta}-\omega\right)\K\svec{\widetilde{\eta}} +\J(\boldV-\boldV_T)\svec{\widetilde{\eta}} + B \right)}_{L^{q}}ds\\
&\quad + \int_0^{t} \norm{e^{(t-s)\J\boldL_T} \calP_{\pm}N(\epsilon)}_{L^q}\,ds \equiv \mathbf{I}+\mathbf{II}+\mathbf{III}.
\end{aligned}\end{equation}
To estimate these terms, we use dispersive estimate \eqref{Eqn-AssumedDecay}. We first get
\begin{equation}\label{Eqn-L2-termI}
\mathbf{I} \lesssim \bka{t}^{-\sigma_q}\norm{\widetilde{\eta}_{\pm}(0)}_{L^1\cap H^1} \leq \frac{C_3}{2}\alpha \bka{t}^{-\muLoc}.
\end{equation} 
The universal constant $C_3$ is determined by this relation.

Term $\mathbf{III}$ is treated in a similar way as for the $L^p$ estimate. We have
\[
\norm{\epsilon^{m_j}}_{L^{q'}} \lesssim \left(\abs{b_+}+\abs{b_-}\right)^{m_j} + \norm{\eta}_{L^{m_jq'}}^{m_j},
\]
where now we interpolate according to
\begin{equation}\label{Eqn-L2Interp}
\norm{\eta}_{L^{m_jq'}}^{m_j} \leq \norm{\eta}_{L^2}^{(1-\widetilde{\theta}_j)m_j} \norm{\eta}_{L^p}^{m_j\widetilde{\theta}_j},
\end{equation}
with $\dis \frac{1}{m_jq'} = \frac{1-\widetilde{\theta}_j}{2}+\frac{\widetilde{\theta}_j}{p}$, or, equivalently, $\dis \widetilde{\theta}_j = \frac{\frac{1}{2}-\frac{1}{m_jq'}}{\frac{1}{2}-\frac{1}{p}}$.
We note that $\widetilde{\theta}_j\in(0,1]$ provided $\dis 2\left(1-\frac{1}{q}\right) < m_j \leq p\left(1-\frac{1}{q}\right)$.
The former inequality is \eqref{Arith5}, while the latter inequality is true for any $q\geq p\geq m_j+1$.
From \eqref{Eqn-EtaL2}, the terms of \eqref{Eqn-L2Interp} give $\alpha$ with the exponent $(1-\widetilde{\theta}_j)m_j\frac{\mZero}{2}+m_j\widetilde{\theta}_j$, which is greater than $1$ by \eqref{Arith6}.  We have
\begin{equation}\label{Eqn-Epslr2}
\norm{\epsilon^{m_j}}_{L^{q'}} \lesssim \alpha^{1+}\bka{t}^{-m_j\widetilde{\theta}_j\sigma_p},
\end{equation}
and if $m_j> 2$,
\begin{equation}\label{Eqn-Epslr2Part2}
\norm{A_j\phi^{m_j-2}\epsilon^2}_{L^{q'}} \lesssim \left(\abs{b_+}+\abs{b_-}\right)^2 + \norm{\eta}_{\Lloc}^2 \leq \alpha^2\bka{t}^{-2\muLoc},
\end{equation}
and both decay faster than $\bka{t}^{-1}$, due to \eqref{Arith7new} and \eqref{Arith8}, respectively.  With \eqref{Eqn-AssumedDecay},
\begin{equation}\label{Eqn-L2-termIII}\begin{aligned}
\mathbf{III} &\lesssim  \alpha^{1+} \int_0^{t} \abs{t-s}^{-\sigma_q} \bka{s}^{-(1+)}\,ds
    \lesssim \alpha^{1+} \bka{t}^{-\muLoc}.
\end{aligned}\end{equation}
The integral is a simple calculation since $\sigma_q<1$.
Term {\bf II} may be included with {\bf III}.  Its contribution is controlled by \eqref{Eqn-Klr} and \eqref{Eqn-Blr} applied with $r=q$.

Assuming $\overline{\alpha_0}>0$ is sufficiently small, we have shown that $\norm{\eta(t)}_{\Lloc}\leq C_3\alpha \bka{t}^{-\muLoc}$.

\subsection{Bootstrap Conclusions}

It was shown in the previous section that, for all $t\in [0,\THy)$,
\begin{equation} \label{Eqn-Bootconcl}
\begin{aligned}
&|\Bm(t)| &&&\leq&& &C_1\alpha \bka{t}^{-\muBm},\\
&\norm{\eta(t)}_{L^p} &&&\leq&& &C_2\alpha \bka{t}^{-\muP},\\
&\norm{\eta(t)}_{\Lloc} &&&\leq&& &C_3\alpha \bka{t}^{-\muLoc}.
\end{aligned}
\end{equation}
As these are continuously evolving quantities, by \eqref{Eqn-Boothypo} it must be that $\THy = \TC$. Together with the estimate for $b_+$, this proves proximity to the soliton family as $t\to\TC$, and so by Lemma \ref{Lemma-Modulation} the decomposition can be extended past $t=\TC$. Therefore, if $\TC=\TD$, then $\TD=+\infty$.

By integration of the first equation of \eqref{Eqn-thetaomega}, there exists $\omega_+\in\calI$, $\abs{\omega(0)-\omega_+}\lesssim \alpha^{\mZero}$ such that the remaining estimates of \eqref{Eqn-thetaomega} hold.  It is essential here that $\muNon > 1$.
From~\eqref{Eqn-Bootconcl}, we have that $\norm{u(t)-\phi_{\omega(t)}e^{i\theta(t)}}_{L^p} \leq C\alpha\bka{t}^{-\muP}$. Together we have, for all $t>0$,
\[
\norm{u(t)-\phi_{\omega_+}e^{i\theta(t)}}_{L^p} \lesssim \alpha\bka{t}^{-\muP} + \alpha^{\mZero}\bka{t}^{-\muNon+1}.
\]
For Theorem \ref{Thm-MainResult}, we use $\muThm = \min\{\muP,\muNon-1\}>0$.

\section{Escape Case}
\label{Section-Escape}
In this section, we consider the case of $\TC<\TD$. The arguments of Section \ref{Section-Convergence} apply for $t\in[0,\TC)$. We extend these arguments to prove that the parameter $\abs{b_{+,\omega(t)}}$ grows exponentially for an interval of time after $\TC$.

\subsection{Hypotheses and First Estimates}
Assume that $\TEx \in(\TC,\TD]$ is the last time for which the following hypotheses \eqref{Eqn-Boothypo2}-\eqref{Eqn-BoothypoF} hold for all $t\in[0,\TEx)$:
\begin{equation} \label{Eqn-Boothypo2}
\begin{aligned}
&|\Bm(t)| &&&\leq&& &2C_5\abs{\Bp(t)},\\  
&\norm{\eta(t)}_{L^p} &&&\leq&& &2C_6 \left( \abs{\Bp(t)}^{\mZero} + \alpha \bka{t}^{-\muP}\right),\\
&\norm{\eta(t)}_{\Lloc} &&&\leq&& &2C_7\left( \abs{\Bp(t)} + \alpha \bka{t}^{-\muLoc} \right).
\end{aligned}
\end{equation}
Universal constants $C_j$ will be determined later. In place of \eqref{Eqn-Defn-TC}, we make two further hypotheses: for all $s<t\in[\TC,\TEx)$,
\begin{equation} \label{Eqn-Boothypo3}
\begin{aligned}
e^{\frac{2}{3}e_2(t-s)} &&\leq&& &\frac{\Bp(t)}{\Bp(s)} \leq e^{\frac{3}{2}e_2(t-s)},
\end{aligned}
\end{equation}
where $e_2 = e_{+,\omega(T)}$, the positive eigenvalue of $\boldL_T$ for $T=\TC$. Recall that $e_1 = e_{-,\omega(T)} = -e_2 <0$. Our final hypothesis is, for all $t\in[0,\TEx)$,
\begin{equation} \label{Eqn-BoothypoF}
\begin{aligned}
\abs{\Bp(t)}<3\alpha_0.
\end{aligned}
\end{equation}
As a particular consequence of \eqref{Eqn-Boothypo3} and \eqref{Eqn-Defn-TC}, note that for $t\in(\TC,\TEx)$,
\begin{equation}\label{Eqn-BpTrel}
\abs{\Bp(t)} > \abs{\Bp(\TC)} \geq \alpha\bka{\TC}^{-1} > \alpha\bka{t}^{-1}.
\end{equation}

Under these hypotheses, our goal is to obtain hypotheses \eqref{Eqn-Boothypo2} without the factor 2, and hypothesis \eqref{Eqn-Boothypo3} with tighter exponents.  We will argue that hypothesis \eqref{Eqn-BoothypoF} {\bf cannot} be improved, and conclude the expected exit behavior.

\begin{lemma}\label{Lemma-thetaomega2}
For all $s<t\in[\TC,\TEx)$, $\omega(t)\in\calI$ and:
\[ \begin{aligned}
|\dot{\omega}(t)| + |\dot{\theta}(t)-\omega(t)|&\leq C\left( \abs{b_+(t)}^{\mZero} + \alpha^{\mZero}\bka{t}^{-\muNon}\right), &&\text{ and }\\
|\omega(t)-\omega(s)| &\leq C\left( \abs{b_+(t)}^{\mZero} + \alpha^{\mZero} \bka{s}^{-\muNon+1} \right), &&\text{ which implies } \\
|e_{\pm,\omega(t)}-e_{\pm,\omega(s)}| &\leq C\left( \abs{b_+(t)}^{\mZero} + \alpha^{\mZero} \bka{s}^{-\muNon+1} \right).
\end{aligned}\]
\end{lemma}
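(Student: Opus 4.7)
The plan is to mirror the proof of Lemma~\ref{Lemma-thetaomega} almost verbatim, swapping in the new bootstrap inputs \eqref{Eqn-Boothypo2}--\eqref{Eqn-BoothypoF} and then using the exponential lower bound in \eqref{Eqn-Boothypo3} to integrate the $|b_+|^{\mZero}$ term cleanly.

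\textbf{Pointwise bound on $\dot\omega, \dot\theta-\omega$.} First I would add \eqref{Eqn-OmegaDynamic} and \eqref{Eqn-ThetaDynamic}, absorb the $\mathcal{O}(\epsilon)$ self-contributions into the left-hand side (taking $\overline{\alpha_0}$ small), and reduce matters to estimating $\abs{\inner{N(\epsilon)}{\text{loc. potential}}}$. Using \eqref{Eqn-NonlinConseq}, this is bounded by combinations of $\abs{b_+}^{m_j} + \abs{b_-}^{m_j} + \norm{\eta}_{\Lloc}^{m_j}$ (and possibly $(\abs{b_\pm}+\norm{\eta}_{\Lloc})^2$ from the $\abs{\phi}^{m_j-2}\abs{\epsilon}^2$ terms, which are lower order after localization since $\phi$ is exponentially decaying). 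Plugging in \eqref{Eqn-Boothypo2} and using that all of $\abs{b_\pm}$ and $\norm{\eta}_{\Lloc}$ are bounded by small quantities so that exponents $m_j \ge m_1 \ge \mZero$ can be lowered to $\mZero$, I obtain
\[
\abs{\dot{\omega}(t)} + \abs{\dot{\theta}(t)-\omega(t)} \lesssim \abs{b_+(t)}^{\mZero} + \alpha^{\mZero}\bka{t}^{-\mZero\muLoc} = \abs{b_+(t)}^{\mZero} + \alpha^{\mZero}\bka{t}^{-\muNon},
\]
which is the first assertion.

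\textbf{Integrated bound.} To get the bound on $\abs{\omega(t)-\omega(s)}$, I would integrate over $[s,t]$ and handle the two pieces separately. For the tail piece, $\muNon>1$ (this follows from \eqref{Arith8}, since $\mZero\muQ > 2-\muQ > 1$), so
\[
\int_s^t \alpha^{\mZero}\bka{\tau}^{-\muNon}\,d\tau \lesssim \alpha^{\mZero}\bka{s}^{-\muNon+1}.
\]
For the $\abs{b_+}^{\mZero}$ piece, the key move is to use the lower exponential bound of \eqref{Eqn-Boothypo3}: for $\tau\in[s,t]$, $\abs{b_+(\tau)} \le \abs{b_+(t)}\,e^{-\frac{2}{3}e_2(t-\tau)}$, so
\[
\int_s^t \abs{b_+(\tau)}^{\mZero}\,d\tau \le \abs{b_+(t)}^{\mZero}\int_s^t e^{-\frac{2}{3}\mZero\, e_2(t-\tau)}\,d\tau \lesssim \abs{b_+(t)}^{\mZero}.
\]
Summing the two contributions yields the second inequality of the lemma. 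The third inequality then follows from the $C^1$ (indeed $C^2$) dependence of the isolated real eigenvalues $e_{\pm,\omega}$ on $\omega$, which is standard perturbation theory from the assumed regularity of the soliton branch.

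\textbf{Staying inside $\calI$.} Finally I would check $\omega(t)\in\calI$. Applying the integrated bound with $s=0$ and combining with Lemma~\ref{Lemma-thetaomega} on $[0,\TC]$ gives $\abs{\omega(t)-\omega_0} \lesssim \alpha_0^{\mZero} + \sup_{\tau\le t}\abs{b_+(\tau)}^{\mZero}$, and by \eqref{Eqn-BoothypoF} the latter is at most $(3\alpha_0)^{\mZero}$. Since the hypothesis of Theorem~\ref{Thm-MainResult} guarantees $\dist(\omega_0,\partial\calI) > C\alpha_0$, choosing $\overline{\alpha_0}$ small enough (so that $\alpha_0^{\mZero-1} \ll 1$) keeps $\omega(t)$ well inside $\calI$.

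\textbf{Main obstacle.} The one genuinely new ingredient compared to Lemma~\ref{Lemma-thetaomega} is handling the term $\abs{b_+}^{\mZero}$, which is \emph{not} summable in time under time decay alone. The essential device is the backward-in-time exponential comparison supplied by \eqref{Eqn-Boothypo3}, which replaces polynomial decay by geometric decay and makes the contribution to $\omega(t)-\omega(s)$ simply comparable to $\abs{b_+(t)}^{\mZero}$. Everything else is a mechanical transcription of the earlier proof.
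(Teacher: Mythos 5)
Your proposal is correct and follows essentially the same route as the paper: the authors likewise say the argument is that of Lemma \ref{Lemma-thetaomega} with the new input $\abs{\inner{N(\epsilon)}{\text{potential}}} \lesssim \abs{\Bp(t)}^{\mZero} + \alpha^{\mZero}\bka{t}^{-\muNon}$, and the device you single out as the ``main obstacle'' is precisely the integration formula \eqref{Eqn-BpInt}, i.e.\ $\int_s^t\abs{\Bp(\tau)}^r\,d\tau \leq \abs{\Bp(t)}^r\int_s^t e^{-\frac{2}{3}r e_2(t-\tau)}\,d\tau$, obtained from the lower bound in \eqref{Eqn-Boothypo3}. Your added details (lowering exponents $m_j$ to $\mZero$, $\muNon>1$ via \eqref{Arith8}, eigenvalue continuity, and the $\dist(\omega_0,\partial\calI)>C\alpha_0$ check) are all consistent with what the paper leaves implicit.
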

\begin{proof}The argument is the same as for Lemma \ref{Lemma-thetaomega}, based on the following estimate:
$\dis
\abs{\inner{N(\epsilon)}{\textrm{potential}}} \lesssim \abs{\Bp(t)}^{\mZero} + \alpha^{\mZero}\bka{t}^{-\muNon}
$.
When integrating quantities in $\abs{\Bp(t)}^r$, for any exponent $r>0$, we use \eqref{Eqn-Boothypo3}:
\begin{equation}\label{Eqn-BpInt}
\int_s^t{\abs{\Bp(\tau)}^r\,d\tau} \leq \abs{\Bp(t)}^r\int_s^te^{-\frac{2}{3}r\,e_2(t-\tau)}\,d\tau. \qedhere
\end{equation}
\end{proof}

\subsection{Improved Estimates}
\label{Subsection-ImprovedEstimates2}

\emph{Growth estimate for $b_+$.}
By Lemma \ref{Lemma-thetaomega2}, the dominant forcing term of \eqref{Eqn-BpDynamic} is $\inner{N(\epsilon)}{\svec{\Ym}}$, and it reads: $\dot{b}_+ - e_2b_+ = \bigO\left(\abs{b_+}^{\mZero} + \alpha^{\mZero}\bka{t}^{-\muNon}\right)$.
Taking $\overline{\alpha_0}$ to be sufficiently small, we may assume $|\dot{b}_+ - e_2b_+|<\frac{1}{5}e_2\abs{b_+}$. After integration, this is a stronger statement than~\eqref{Eqn-Boothypo3}:
\begin{equation}\label{Eqn-Bootconc3}
e^{\frac{4}{5}e_2(t-s)} \leq \frac{\Bp(t)}{\Bp(s)} \leq e^{\frac{6}{5}e_2(t-s)}.
\end{equation}

\emph{Estimate on $b_-$.} The same argument applied to \eqref{Eqn-BmDynamic} gives
\[
\abs{b_-(t)} \leq e^{+e_1(t-\TC)}\abs{b_-(\TC)} + \int_{\TC}^te^{+e_1(t-\tau)}\abs{b_+(\tau)}\,d\tau.
\]
Compare the estimate for $\Bm$ from the previous chapter \eqref{Eqn-Boothypo} with \eqref{Eqn-Defn-TC} to see that $\abs{b_-(\TC)} \leq C_1 \abs{b_+(\TC)}$.
We conclude that $\abs{b_-(t)} \leq C_5 \abs{b_+(t)}$ for some $C_5$.

\emph{Decomposition of $\eta$.}
Analogous to \eqref{Eqn-EtaL2}, and using \eqref{Orthog0}-\eqref{Orthog4}, we have
\begin{equation}\label{Eqn-EtaL2-redux}
\norm{\eta(t)}_{L^2}\leq C_8\left(\abs{b_+(t)}^\frac{\mZero}{2} + \alpha^\frac{\mZero}{2}\right).
\end{equation}
Under our new hypotheses, let us revisit \eqref{Eqn-Klr} and \eqref{Eqn-Blr}.  As before, we use \eqref{Eqn-BEstimate}, Lemma~\ref{Lemma-thetaomega2} and \eqref{Arith8} to conclude:
\begin{equation}\label{Eqn-Klr-Blr-redux}
\norm{\left(\dot{\theta}-\omega_T\right)\K\svec{\widetilde{\eta}}}_{L^{r'}} +\norm{\J(\boldV-\boldV_T)\svec{\widetilde{\eta}}}_{L^{r'}} + \norm{B}_{L^{r'}} \ll \left(\abs{b_+(t)}^{\mZero}+\alpha^{\mZero}\bka{t}^{-(1+)}\right).
\end{equation}

\emph{$L^p$ estimate of $\eta$.}
For $t\in[\TC,\TEx)$, we estimate $\norm{N(\epsilon)}_{L^{p'}}$  using \eqref{Eqn-LpInterp} and \eqref{Eqn-Boothypo2}:
\begin{equation}\label{Eqn-LpInterp-redux}
\norm{N(\epsilon)}_{L^{p'}} \lesssim \abs{b_+}^{\mZero} + \sum_j\norm{\eta}_{L^2}^{(1-\theta_j)m_j} C_6^{m_j\theta_j}\left(\abs{b_+(t)}^{\mZero}+\alpha\bka{t}^{-\muP}\right)^{m_j\theta_j}.
\end{equation}
Due to \eqref{Arith3} and \eqref{Eqn-BoothypoF}, we may bound \eqref{Eqn-LpInterp-redux} by $\frac{C_6}{C_9}\left( \abs{b_+}^{\mZero} + \alpha^{1+}\bka{t}^{-(1+)}\right)$, for any universal constant $C_9>0$,
by taking the universal constant $C_6$ sufficiently large and $\overline{\alpha_0}$ sufficiently small.
Now apply \eqref{Eqn-AssumedDecay} and use both \eqref{Eqn-LpInterp-redux} and, for $t\in[0,\TC)$, the estimates from the previous chapter that led to \eqref{Eqn-LpIntegral}:
\[ \norm{\etat_{\pm}(t)}_{L^p} \leq \frac{C_2}{2}\alpha \bka{t}^{-\sigma_p} + C\alpha^{1+} \int_0^t{ \abs{t-s}^{-\sigma_p}\bka{s}^{-(1+)}\, ds} + \frac{C_6}{C_9}\int_{\TC}^t{ \abs{t-s}^{-\sigma_p}\abs{b_+(s)}^{\mZero}\,ds}. \]
From \eqref{Eqn-LpIntegral}, the first terms are bounded by $C_2\alpha\bka{t}^{-\sigma_p}$.  Assume that $C_6\geq C_2$. Integrate the final term with \eqref{Eqn-BpInt}, and take $C_9$ to be the constant factor. This completely determines~$C_6$.  We have shown $\norm{\etat_{\pm}(t)}_{L^p} < C_6\left(\alpha\bka{t}^{-\sigma_p}+\abs{b_+(t)}^{\mZero}\right)$.

\emph{$\Lloc$ estimate of $\eta$.} Recall the terms {\bf I}, {\bf II} and {\bf III} of \eqref{Eqn-L2Split}. The estimate \eqref{Eqn-L2-termI} of term {\bf I} still applies.
For term {\bf III}, we first consider \eqref{Eqn-L2Interp} for $t>\TC$, using \eqref{Eqn-Boothypo2}, \eqref{Eqn-BpTrel} and \eqref{Eqn-EtaL2-redux},
\[ \begin{aligned}
\norm{\eta}_{L^{m_jq'}}^{m_j}
&\lesssim o(1)\left(\abs{b_+}^{\mZero}+\alpha\bka{t}^{-\muP}\right)^{m_j\widetilde{\theta_j}} \lesssim \abs{b_+(t)}^{1+},
\end{aligned}\]
where the second inequality relied on \eqref{Arith7new} and \eqref{Arith6} for the exponents of $\Bp$ and $\alpha$, respectively.
If $m_j > 2$,
\[
\norm{A_j\phi^{m_j-2}\epsilon^2}_{L^{q'}} \lesssim \abs{b_+}^2 + \alpha^2\bka{t}^{-2\muLoc},
\]
as in \eqref{Eqn-Epslr2Part2}. As before, all terms decay faster than $\bka{t}^{-1}$ and we have:
\[ \begin{aligned}
{\bf III}
\lesssim \int_{\TC}^{t}{\abs{b_+(s)}^{1+}\,ds} +  \alpha^{1+}\int_0^{t}{\abs{t-s}^{-\sigma_q}\bka{s}^{-(1+)}\,ds},
\end{aligned}\]
which are the same integrals as \eqref{Eqn-BpInt} and \eqref{Eqn-L2-termIII}, respectively.
Term {\bf II} may be included for $s\in[0,t)$, controlled by a combination of \eqref{Eqn-Klr}, \eqref{Eqn-Blr} and \eqref{Eqn-Klr-Blr-redux}:
\[
{\bf II} + {\bf III} \lesssim  \abs{b_+(t)}^{1+} + \alpha^{1+}\bka{t}^{-\sigma_q}.
\]
This concludes our estimate of $\norm{\widetilde{\eta}_\pm}_{\Lloc}$.

\subsection{Bootstrap Conclusions}
In the previous section, we proved \eqref{Eqn-Bootconc3} and that, for all $t\in[\TC,\TEx)$,
\begin{equation}\label{Eqn-Bootconc2}
\begin{aligned}
&|\Bm(t)| &&&\leq&& &C_5\abs{\Bp(t)},\\
&\norm{\eta(t)}_{\Lloc} &&&\leq&& &C_7\left(\abs{\Bp(t)} + \alpha \bka{t}^{-\muLoc}\right),\\
&\norm{\eta(t)}_{L^p} &&&\leq&& &C_6\left(\abs{\Bp(t)}^{\mZero} + \alpha \bka{t}^{-\sigma_p}\right).
\end{aligned}
\end{equation}
These are continuously evolving quantities.  From \eqref{Eqn-BoothypoF}, \eqref{Eqn-BpTrel} and \eqref{Eqn-Bootconc3}, we conclude that
\[
\TEx \leq \TC + \frac{5}{4e_2}\ln\left(3\frac{\alpha_0}{\alpha}\bka{\TC}\right) < +\infty.
\]
Assuming $\overline{\alpha_0}$ is sufficiently small,  Lemma \ref{Lemma-Modulation}, \eqref{Eqn-BoothypoF} and \eqref{Eqn-Bootconc2} imply that the decomposition can be extended past time $\TEx$, and hence $\TEx < \TD$. The only possible failure at time $t=\TEx$ is \eqref{Eqn-BoothypoF}, and so we conclude that $b_+(\TEx) = 3\alpha_0$.  The conclusion of Theorem \ref{Thm-MainResult} then follows for some $\omega_+ \approx \omega(\TEx)$.

\section*{Acknowledgments}
We would like to thank Galina Perelman for Remark \ref{Remark-BuslaevPerelman}, and for bringing the work of Beceanu to our attention.  We also thank Eduard-Wilhelm Kirr for discussion of his papers.

\section*{Funding}
This work was partially supported by the Natural Sciences and
Engineering Research Council of Canada [261356-08 to T.-P.~T.]; and
the Pacific Institute for the Mathematical Sciences [through
  fellowships to V.~C. and I.~Z.].

\end{document}